\newcommand{\sm}{\left(\smallmatrix}
\newcommand{\esm}{\endsmallmatrix\right)}
\newcommand{\px}{\begin{pmatrix}}
\newcommand{\epx}{\end{pmatrix}}
\newcommand{\G}{\Gamma}
\newcommand{\D}{\Delta}
\newcommand{\lt}{\left}
\newcommand{\rt}{\right}
\newcommand{\Z}{\mathbb Z}
\newcommand{\R}{\mathbb R}
\newcommand{\N}{\mathbb N}
\newcommand{\SL}{\text{SL}}
\newcommand{\ord}{\text{ord}}
\newtheorem{thm}{Theorem}
\newtheorem{lem}[thm]{Lemma}
\newtheorem{cor}[thm]{Corollary}
\newtheorem{prop}[thm]{Proposition}
\newtheorem{alg}[thm]{Algorithm}
\theoremstyle{definition}
\newtheorem{df}[thm]{Definition}
\theoremstyle{remark}
\numberwithin{equation}{section}
\numberwithin{thm}{section}
\date{\today}
\subjclass[2010]{Primary 14H55, Secondary 11F06 and 11G18}
\address{Bo-Hae Im, Department of Mathematics, Chung-Ang University, 84 Heukseok-Ro, Dongjak-Gu, Seoul 156-756, South Korea}
\email{imbh@cau.ac.kr}
\address{Daeyeol Jeon, Department of
Mathematics education, Kongju National University, 56
Gongjudaehak-ro, Gongju-si, Chungcheongnam-do 314-701, South Korea}
\email{dyjeon@kongju.ac.kr}
\address{Chang Heon Kim,
Department of Mathematics, Sungkyunkwan University, Suwon 440-746, South Korea}
\email{chhkim@skku.edu}
\keywords{Weierstrass points, modular curves}
\thanks{Bo-Hae Im was supported by Basic Science Research Program through the National Research Foundation of Korea (NRF) funded
        by the Ministry of Education, Science and Technology (NRF-2014R1A1A2053748).}
\begin{document}

\title{Notes on Weierstrass points of modular curves $X_0(N)$}

\author{Bo-Hae Im, Daeyeol Jeon and Chang Heon Kim}
\maketitle
\begin{abstract}
We give conditions when the fixed points by the partial Atkin-Lehner involutions on $X_0(N)$ are Weierstrass points as an extension of the result by Lehner and Newman \cite{LN}.
Furthermore, we complete their result by determining whether the fixed points by the full Atkin-Lehner involutions on $X_0(N)$ are Weierstrass points or not.
\end{abstract}

\section{Introduction}\label{intro}
Let $\mathfrak H$ be the complex upper half plane and $\Gamma$ be a congruence subgroup of the full modular group $\SL_2(\mathbb Z)$.  Denote by $X(\Gamma)$ the modular curve  obtained by compactifying the quotient
$\Gamma\backslash \mathfrak H$. We can view $X(\Gamma)$ as a compact Riemann surface analytically. For each positive integer $N$, we let $\Gamma_0(N)$ be the Hecke subgroup of $\SL_2(\mathbb{Z})$ defined by
$$
\Gamma_0(N)=\left\{
\px a&b\\c& d\epx \in \SL_2(\mathbb{Z}) \, | \, c\equiv 0 \pmod{N} \right\}
$$
 and let $ X_0(N):=X(\Gamma_0(N))$.

We recall the definitions of a Weierstrass point  of a compact Riemann surface and its Weierstrass weight as in \cite{AO}.  One says that a point $P$ of a compact Riemann surface $X$ of genus $g\ge 2$ is  \textit{a Weierstrass point} of $X$ if there is a holomorphic differential $\omega$ (not identically zero) with a zero of order $\geq g$ at $P$. If $P\in X$ and $\omega_1,\omega_2,\ldots, \omega_g$ form a basis for the holomorphic differentials  on $X$ with the property that
$$0=\ord_P(\omega_1)<\ord_P(\omega_2)<\cdots <\ord_P(\omega_g),$$
then \textit{the Weierstrass weight of $P$} is the non-negative integer defined by
$$\text{wt}(P):=\sum\limits_{j=1}^g(\ord_P(\omega_j)-j+1).$$
Note that $\text{wt}(P)>0$ if and only if $P$ is a Weierstrass point of $X$.

The Weierstrass points of modular curves have been studied:  Lehner and Newman \cite{LN} has given conditions when the cusp at infinity is a Weierstrass point of $X_0(N)$ for $N=4n, 9n$ and Atkin \cite{A} has given  conditions for the case of $N=p^2n$ where $p$ is a prime $\geq 5$. Also, Ogg \cite{Ogg}, Kohnen \cite{Kohnen1, Kohnen2} and Kilger~\cite{Kilger} have given some conditions when the cusp at infinity is not a Weierstrass point of $X_0(N)$ for certain $N$. And  Ono \cite{Ono} and Rohrlich \cite{Rohrlich} have investigated Weierstrass points of $X_0(p)$ for various primes $p$. Recently  in \cite{IJK} we investigated Weierstrass points on coverings of $X_0(N)$.

The Weierstrass points have been illustrated as an important class in number theory. For example, they have been used to determine the group structure of the automorphism groups of compact Riemann surfaces.

The main purpose of this paper is to give some conditions of being the Weierstrass points of $X_0(N)$.  To describe our main result in this paper more precisely, we recall the definition of the Atkin-Lehner involution.  We call a positive divisor $Q$ of $N$ with $\gcd(Q,N/Q)=1$ an {\it exact divisor} of $N$ and denote it by $Q\| N$.
For each $Q\|N$, consider the matrices of the form $\begin{pmatrix} Qx & y\\Nz & Qw \end{pmatrix}$ with $x,y,z,w\in\mathbb Z$ and determinant $Q.$ Then such a matrix defines a unique involution on $X_0(N)$ which is called the {\it Atkin-Lehner involution} and denoted by $W_Q.$
If $Q=N,$ then $W_Q$ is called the {\it full Atkin-Lehner involution,} and otherwise it is said to be a {\it partial Atkin-Lehner involution}.
Sometimes we regard $W_Q$ as a matrix.
%For each $Q\| N$, $W_Q$ denotes the corresponding Atkin-Lehner involution on $X_0(N)$.

Lehner and Newman \cite{LN} have shown that the fixed points by the full Atkin-Lehner involution on $X_0(N)$ are Weierstrass points except possibly for finitely many $N$ which are listed in Lemma~\ref{full}. In order to obtain the result, they have applied Sch\"{o}neberg's Theorem (\cite{SCH}). Our main result in this paper is to determine explicitly when the fixed points by the partial Atkin-Lehner involutions on $X_0(N)$ are Weierstrass points as an extension of the result by Lehner and Newman \cite{LN}. Also in this paper we give an algorithm to generate $\Gamma_0(N)$-inequivalent fixed points by the full Atkin-Lehner involution and we provide a computational method which can take care of the exceptional cases listed in Lemma~\ref{full} which are not covered in the Lehner and Newman \cite{LN}.

This paper is organized as follows. In Section~\ref{fixed}, we recall the formula (\cite{FH}) for the number of fixed points on $X_0(N)$ by the Atkin-Lehner involutions and explain algorithmically how to generate $\Gamma_0(N)$-inequivalent fixed points of $X_0(N)$ by the full Atkin-Lehner involution $W_N$.  In Section~\ref{weier},  we give conditions when fixed points by the Atkin-Lehner involutions are Weierstrass points. We apply the formula (\cite{FH}) of the number of fixed points on $X_0(N)$ by the Atkin-Lehner involutions and Sch\"{o}neberg's Theorem (\cite{SCH}), and we give some new formulae for the number of fixed points when so-called {\it the elliptic condition} (Definition~\ref{elliptic}) is satisfied and apply them to obtain the conditions for Weierstrass points. In Section~\ref{comp}, we consider the exceptional cases  which are not determined solely by Sch\"{o}neberg's Theorem (\cite{SCH}) and the formula given in \cite{FH}, and we give a computational explanation how to determine whether they are Weierstrass points or not.

\section{Fixed points by Atkin-Lehner involutions}\label{fixed-points}\label{fixed}

Let $X_0^{+Q}(N)$ be the quotient space of $X_0(N)$ by $W_Q$. Let $g_0(N)$ and $g_0^{+Q}(N)$ be the genera of $X_0(N)$ and $X_0^{+Q}(N)$ respectively.
Then $g_0^{+Q}(N)$ is computed by the Riemann-Hurwitz formula as follows:
$$g_0^{+Q}(N)=\frac{1}{4}(2g_0(N)+2-\nu(Q)),$$
where $\nu(Q):=\nu(Q;N)$ is the number of fixed points on $X_0(N)$ by $W_Q$. We recall the formula for $\nu(Q)$.

\begin{prop}\label{FH}$($\cite{FH}$)$
For each $Q\|N$, $\nu(Q)$ is given by
\begin{align}\label{genus} \notag
\nu(Q)=&\left(\prod_{p|N/Q}c_1(p)\right)h(-4Q) \\ \notag
&+\begin{cases}\left(\prod_{p|N/Q}c_2(p)\right)h(-Q),&\text{if}\,\, 4\leq Q\equiv3\pmod{4},\\
0,&\text{otherwise}\end{cases}\\
&+\begin{cases}\prod_{p|N/2}\left(1+\left(\frac{-4}{p}\right)\right),&\text{if}\,\, Q=2,\\
0,&\text{otherwise}\end{cases}\\ \notag
&+\begin{cases}\prod_{p|N/3}\left(1+\left(\frac{-3}{p}\right)\right),&\text{if}\,\, Q=3,\\
0,&\text{otherwise}\end{cases}\\ \notag
&+\begin{cases}\prod_{p^k|N/Q}\left(p^{[\frac{k}{2}]}+p^{[\frac{k-1}{2}]}\right),&\text{if}\,\, Q=4,\\
0,&\text{otherwise}\end{cases}
\end{align}
where $h(-d)$ is the class number of primitive quadratic forms of discriminant $-d$, $(\frac{\cdot}{\cdot})$ is the Kronecker symbol and the functions $c_i(p)$ are defined as follows: for $i=1,2$,
\begin{align*}
c_i(p)=&\begin{cases}1+\left(\frac{-Q}{p}\right),&\text{if}\,\, p\neq 2\,\,\text{and}\,\, Q\equiv3\pmod{4},\\
1+\left(\frac{-4Q}{p}\right),&\text{if}\,\, p\neq 2\,\,\text{and}\,\, Q\not\equiv3\pmod{4},\end{cases}\\
c_1(2)=&\begin{cases}1,&\text{if}\,\, Q\equiv1\pmod{4}\,\,\text{and}\,\, 2\|N,\\
0,&\text{if}\,\, Q\equiv1\pmod{4}\,\,\text{and}\,\, 4|N,\\
2,&\text{if}\,\, Q\equiv3\pmod{4}\,\,\text{and}\,\, 2\|N,\\
3+\left(\frac{-Q}{2}\right),&\text{if}\,\, Q\equiv3\pmod{4}\,\,\text{and}\,\, 4\|N,\\
3\left(1+\left(\frac{-Q}{2}\right)\right),&\text{if}\,\, Q\equiv3\pmod{4}\,\,\text{and}\,\, 8|N,
\end{cases}\\
c_2(2)=&1+\left(\frac{-Q}{2}\right),\ \ \text{if}\,\, Q\equiv3\pmod{4}.
\end{align*}

\end{prop}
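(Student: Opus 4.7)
The plan is to partition the fixed-point set of $W_Q$ on $X_0(N)$ into $\Gamma_0(N)$-orbits of points in $\mathfrak H$ and fixed cusps, and to enumerate each part separately. The $\mathfrak H$-fixed orbits correspond bijectively to $\Gamma_0(N)$-conjugacy classes of elliptic elements in the coset $\Gamma_0(N)W_Q$, and these are counted by class numbers of imaginary quadratic orders together with local correction factors from Eichler's formula; the fixed cusps are analyzed combinatorially and turn out to contribute only when $Q=4$.

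Representing $\alpha\in\Gamma_0(N)W_Q$ as $\begin{pmatrix} QX & Y \\ NZ & QW \end{pmatrix}$ with $\det\alpha=Q$, the fixed-point equation $NZ\tau^2+Q(W-X)\tau-Y=0$ has discriminant $\Delta=Q^2(X+W)^2-4Q$, and the condition $\tau\in\mathfrak H$ forces $(X+W)^2<4/Q$. Setting $t=X+W$ and $g=\gcd(NZ,Q(W-X),Y)$, the primitive reduction of the form has discriminant $D=\Delta/g^2$, which is the discriminant of the CM order $\mathcal O_D$ attached to $\tau$. Requiring $D\equiv 0,1\pmod 4$ restricts the admissible pairs $(t,g)$ to a short list: $t=0,g=1$ giving $D=-4Q$ for every $Q$; $t=0,g=2$ giving $D=-Q$ precisely when $Q\equiv 3\pmod 4$ (the source of the $h(-Q)$ summand for $Q\ge 4$); and, for $Q\in\{2,3\}$, the extra values $t=\pm 1$ with $g=1$ contributing the discriminants $-4$ and $-3$ respectively.

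For each admissible $D$, the number of $\Gamma_0(N)$-conjugacy classes of $\alpha$ with associated order $\mathcal O_D$ equals $h(D)$ times a product of local counts over primes $p\mid N/Q$, each local count being the number of optimal embeddings $\mathcal O_D\hookrightarrow M_2(\mathbb Z_p)$ compatible with the Eichler order of level $N/Q$. For odd $p$, Eichler's formula reduces this to $1+\left(\frac{D}{p}\right)$, which unfolds into the factors $c_1(p)$ and $c_2(p)$ of the statement and into the Kronecker-symbol products appearing in the $Q=2,3$ corrections. At $p=2$ the Eichler order takes three non-isomorphic shapes according as $v_2(N/Q)=1$, $2$, or is at least $3$, and the combined enumeration over these shapes and the parity of $Q\pmod 4$ produces the five-branch definition of $c_1(2)$. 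The $Q=4$ correction accounts for the fixed cusps: a direct analysis of the induced $W_Q$-action on the $r$-component of a cusp shows that inversion on $(\mathbb Z/\sqrt Q)^\times$ can have a fixed class only when $\sqrt Q=2$, i.e.\ $Q=4$, and enumerating cusps of $\Gamma_0(N)$ whose denominator has $Q$-part equal to $2$ yields the divisor-type factor $\prod_{p^k\|N/4}(p^{[k/2]}+p^{[(k-1)/2]})$.

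The principal technical obstacle is the 2-adic optimal-embedding count giving $c_1(2)$: one must explicitly describe the Eichler order at $2$ and its $W_Q$-structure in each of the three level cases, enumerate the local $\mathrm{GL}_2(\mathbb Z_2)$-conjugacy classes of optimal embeddings $\mathcal O_D\hookrightarrow M_2(\mathbb Z_2)$, and combine with the parity of $Q$ to isolate the five sub-cases. Once this is assembled with the straightforward odd-prime counts and the $Q=4$ cusp enumeration, summing contributions over the admissible $(t,g)$ reproduces the displayed formula for $\nu(Q)$.
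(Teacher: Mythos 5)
The paper does not actually prove this proposition: it is quoted verbatim from Furumoto--Hasegawa \cite{FH}, so there is no internal proof to compare against. Your outline does follow the strategy by which such formulas are derived in that source (and in Ogg's earlier work): split the fixed locus into interior points and cusps, parametrize interior fixed points by elements $\alpha=\gamma W_Q$ of the coset via the discriminant $Q^2t^2-4Q$ with $t=X+W$ constrained by $Qt^2<4$, attach primitive quadratic forms/orders of discriminant $-4Q$ or $-Q$, and count via class numbers times local optimal-embedding numbers. That architecture is correct, and your identification of the admissible $(t,g)$ and of the $Q=4$ term as a cusp count is consistent with the formula.

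As a proof, however, the proposal has concrete gaps. First, the claimed bijection between fixed points in $\mathfrak H$ and $\Gamma_0(N)$-conjugacy classes of elliptic coset elements is false as stated: $\alpha$, $-\alpha$ and $\mathrm{adj}(\alpha)=Q\alpha^{-1}$ all fix the same point, so the elements with $t=+1$ and $t=-1$ (for $Q=2,3$) must be identified, and in general one needs the normalized correspondence with classes of primitive forms in $\mathcal Q^{\circ}_{d,N,\beta}/\Gamma_0(N)$ (as in Section~\ref{fixed}) rather than raw conjugacy classes. Second, for $Q=3$ your own list produces discriminant $-3$ twice, once from $(t,g)=(0,2)$ and once from $(t,g)=(\pm1,1)$, while the formula contains only a single $\prod_{p\mid N/3}\bigl(1+\bigl(\tfrac{-3}{p}\bigr)\bigr)$ term; you must show these contributions describe the same fixed points (or that one case is vacuous), otherwise the count is doubled. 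Third, your reason for restricting the cusp contribution to $Q=4$ is wrong: inversion on $(\Z/m\Z)^{*}$ has fixed classes for every $m$ (e.g.\ $m=3$), so that argument does not exclude $Q=9$; the correct obstruction is that a coset element fixing a boundary point needs $Q^2t^2-4Q$ to be a square, and $Qt^2=4$ with $t\in\Z\setminus\{0\}$ forces $Q=4$. Finally, the five-branch formula for $c_1(2)$ --- which is the only genuinely hard part of the statement and the reason the formula has its shape --- is explicitly deferred as a ``technical obstacle'' rather than carried out, and the same is true of the verification that the odd-prime local counts unfold into $c_1(p)$ versus $c_2(p)$ according to $Q\bmod 4$. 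As it stands the proposal is a plausible plan matching the cited derivation, not a proof.
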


\

Next we give an algorithm to find $\Gamma_0(N)$-inequivalent fixed points of $W_N$ on $X_0(N)$.

For a positive integer $d$ conguent to $0$ or $3$ modulo $4$, we denote by $\mathcal Q_d$
the set of positive definite integral binary quadratic forms
$$Q(x,y)=[a,b,c]:=ax^2+bxy+cy^2$$
with discriminant $-d=b^2-4ac$.
Then ${\rm SL}_2(\mathbb Z)$ acts on $\mathcal Q_d$ by
$$Q\circ\gamma(x,y)=Q(px+qy,rx+sy)$$
where $\gamma=\begin{pmatrix}p&q\\r&s\end{pmatrix}\in {\rm SL}_2(\mathbb Z)$.
We say that a quadratic form $[a,b,c]\in \mathcal Q_d$ is {\it primitive} if $\gcd(a,b,c)=1$.
A primitive
positive definite form $[a,b,c]$ is said to be {\it reduced}
if
\begin{equation} \label{RED}
|b|\leq a\leq c,\,\,\mbox{and}\,\,b\geq 0\,\,\mbox{if either}\,\,|b|=a\,\,\mbox{or}\,\,a=c.
\end{equation}
Let $\mathcal Q_d^\circ\subset\mathcal Q_d$ be the subset of primitive forms.
Then ${\rm SL}_2(\mathbb Z)$ also acts on $\mathcal Q_d^\circ$.
As is well known, there is a 1-1 correspondence between the set of classes $\mathcal Q_d^\circ/{\rm SL}_2(\mathbb Z)$ and the set of reduced forms.

Now for a fixed positive integer $N$ we let $d$ be a positive integer such that $-d$ is congruent to a square modulo $4N$. We choose an integer $\beta$ with $-d\equiv \beta^2 \pmod{4N}$. Then we define
$$
\mathcal Q_{d,N}^\circ=\{[aN,b,c]\in\mathcal Q_d\,|\,\gcd(a,b,c)=1\}
$$
and
$$\mathcal Q_{d,N,\beta}^\circ=\{[aN,b,c]\in\mathcal Q_{d,N}^\circ \,|\,\     b\equiv \beta \pmod{2N} \}.$$
Then $\Gamma_0(N)$ acts on both $\mathcal Q_{d,N}^\circ$ and $\mathcal Q_{d,N,\beta}^\circ$.
For $Q=[aN, b, c]\in \mathcal Q_{d,N}^\circ$ we define $Q|{W_N}=[cN, -b, a]$. We observe that this defines an action of $W_N$ on the set $\mathcal Q_{d,N}^\circ/\Gamma_0(N)$.
\par
Assume that $N\ge 5$ and $W_N$ fixes $\Gamma_0(N)\tau \in X_0(N)$ for some $\tau\in\mathfrak H$. This means that
$
\sm p&q \\ r&s \esm W_N \tau= \tau
$
for some $\sm p&q \\ r&s \esm \in \Gamma_0(N)$.
Thus $\tau$ satisfies a quadratic equation $$Ns X^2 -(r+qN) X +p =0,$$ whose discriminant $D$ is given by $D=(r+qN)^2-4psN=(r'-q)^2 N^2 -4N$ where $r=r' N$.
Since $D\le 0$,  we come up with $(r'-q)^2 N \le 4$. Since we have assumed that $N\ge 5$, we must have  $r'-q=0$ and therefore $D=-4N$. Thus one has $D=4q^2 N^2 -4psN=-4N$, which gives $\gcd(p,qN)=1$ and hence $\gcd(s,-2qN,p)=1$ or $2$. If $\gcd(s,-2qN, p)=2$, then both $p$ and $s$ should be even. It then follows from $ps-q^2 N=1$ that $N\equiv 3 \pmod{4}$. Now we set $Q_\tau:=[sN, -2qN, p]$. Then we see that
$$
 Q_\tau \in \mathcal Q_{4N,N,0}^\circ
\, \hbox{ or } \, \frac 12 Q_\tau \in \mathcal Q_{N,N,N}^\circ
$$
where the latter case may happen only if $N\equiv 3 \pmod{4}$.
Let $\mathcal F_N$ be the set of the fixed points of $W_N$ on $X_0(N)$ and let
\begin{equation*}
\mathcal G_N:=\begin{cases} \mathcal Q_{4N,N,0}^\circ/\Gamma_0(N)\cup \mathcal Q_{N,N,N}^\circ/\Gamma_0(N),&\text{if } N\equiv 3\pmod 4\\
\mathcal Q_{4N,N,0}^\circ/\Gamma_0(N),&\text{otherwise},
\end{cases}
\end{equation*}
where the union is disjoint.
Now we define a map $\phi:\mathcal F_N\to \mathcal G_N$ by
\begin{equation*}
\phi(\Gamma_0(N)\tau)=\begin{cases} Q_\tau,&\text{if } (s,-2qN,p)=1\\
\frac{1}{2}Q_\tau,&\text{if } (s,-2qN,p)=2.
\end{cases}
\end{equation*}
Then we can check easily $\phi$ is well-defined.

On the other hand, given a quadratic form $Q\in
\mathcal Q_{4N,N,0}^\circ/\Gamma_0(N)$ or $Q\in \mathcal Q_{N,N,N}^\circ/\Gamma_0(N)$ we can find a fixed point $\Gamma_0(N)\tau \in X_0(N)$ such that $Q=\phi(\Gamma_0(N)\tau)$.
Thus the map $\phi$ is surjective.

Now we deduce from \cite[Proposition in p.505]{GKZ} that the natural maps
\begin{equation}\label{map}
\begin{array}{l}
\mathcal Q_{4N,N,0}^\circ/\Gamma_0(N) \to \mathcal Q_{4N}^\circ /{\rm SL}_2(\mathbb Z) \text{ and }\\
\mathcal Q_{N,N,N}^\circ/\Gamma_0(N) \to \mathcal Q_{N}^\circ /{\rm SL}_2(\mathbb Z) \text{ (when $N\equiv 3 \pmod{4}$)}
\end{array}
\end{equation}
are well-defined and bijective.

From \cite{FH}, we have the following formula for the number of fixed points of $W_N$ on $X_0(N)$:
\begin{equation*}
\nu(N)=\#\mathcal F_N=\delta_N h(-4N),
\end{equation*}
%\begin{equation*}
%\#\mathcal F_N=\begin{cases} 2h(-4N),&\text{if } N\equiv 7\pmod 8\\
%\frac{4}{3}h(-4N),&\text{if } N\equiv 3\pmod 8\\
%h(-4N),&\text{otherwise}
%\end{cases}
%\end{equation*}
where $h(-4N)$ denotes $\#\mathcal Q_{4N}^\circ /{\rm SL}_2(\mathbb Z)$ and
\begin{equation*}
\delta_N=\begin{cases}2,&\text{if}\,\,N\equiv7\pmod{8}\\
\frac{4}{3},&\text{if}\,\,N\equiv3\pmod{8} \text{ and } N>3,\\
1,&\text{otherwise}.
\end{cases}
\end{equation*}
Since it is well-known that
\begin{equation}\label{order}
h(-4N)=\begin{cases}h(-N),&\text{if}\,\,N\equiv7\pmod 8\\
3h(-N),&\text{if}\,\, N\equiv3\pmod{8} \text{ and } N>3,\end{cases}
\end{equation}
we have $\#\mathcal F_N=\#\mathcal G_N$, and hence $\phi$ is a bijection.

Now we will find $\Gamma_0(N)$-inequivalent fixed points in $\mathcal F_N$ by finding $\Gamma_0(N)$-inequivalent quadratic forms in $\mathcal G_N$ which can be obtained by pulling back the reduced forms in $\mathcal Q_{4N}^\circ /{\rm SL}_2(\mathbb Z)$ and $\mathcal Q_{N}^\circ /{\rm SL}_2(\mathbb Z)$ through the maps \eqref{map}.
Before providing an algorithm, we need the following lemma.
\begin{lem} \label{alg1_lem}
 For a fixed positive integer $N$ we let $d$ be a positive integer such that $-d$ is congruent to a square modulo $4N$. We choose an integer $\beta$ with $-d\equiv \beta^2 \pmod{4N}$. Then
the following statements are true.
 \begin{enumerate}
 \item[(1)] Given a primitive quadratic form $Q\in \mathcal Q_d^\circ$ there exists a quadratic form $[a,b,c]$ which is ${\rm SL}_2(\mathbb Z)$-equivalent with $Q$ and $\gcd(a, N)=1$.
\item[(2)] Let $K$ be a solution to the linear congruence equation $2aX+b\equiv -\beta \pmod{2N}$ and set $[A,B,C]:=[a,b,c]\circ \sm K&-1\\1&0 \esm$. Then $[A,B,C]$ belongs to
$\mathcal{Q}_{d,N,\beta}.$
\end{enumerate}
\end{lem}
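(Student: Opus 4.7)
The plan is to handle the two parts separately. For \textbf{(1)}, note that acting on $[a_0,b_0,c_0]$ by a matrix $\gamma=\sm p&*\\r&*\esm\in\SL_2(\Z)$ yields a form whose first coefficient is $a_0 p^2+b_0 pr+c_0 r^2$, so it suffices to find $(p,r)\in\Z^2$ with $\gcd(p,r)=1$ and with this value coprime to $N$. I would argue this locally at each prime $\ell\mid N$ and patch by the Chinese remainder theorem. At a prime $\ell$, primitivity of $[a_0,b_0,c_0]$ supplies a pair $(p_\ell,r_\ell)\not\equiv(0,0)\pmod{\ell}$ at which the form is nonzero modulo $\ell$: take $(1,0)$ if $\ell\nmid a_0$, $(0,1)$ if $\ell\nmid c_0$, and $(1,1)$ otherwise (then $\ell\mid a_0,c_0$ forces $\ell\nmid b_0$, so $a_0+b_0+c_0\not\equiv 0\pmod \ell$). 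Patching gives $(p_0,r_0)\in\Z^2$ with $\gcd(Q(p_0,r_0),N)=1$ and, since at each such $\ell$ at least one coordinate is nonzero modulo $\ell$, $\gcd(p_0,r_0)$ is coprime to $N$. Writing $d=\gcd(p_0,r_0)$, the coprime pair $(p,r)=(p_0/d,r_0/d)$ still satisfies $\gcd(Q(p,r),N)=1$ because $Q(p_0,r_0)=d^2Q(p,r)$ and $\gcd(d,N)=1$. Completing $(p,r)$ to an $\SL_2(\Z)$-matrix produces the required equivalent form.

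For \textbf{(2)}, the strategy is direct computation. With $\gamma=\sm K&-1\\1&0\esm$,
\[
[A,B,C]=[aK^2+bK+c,\ -2aK-b,\ a],
\]
so $C=a$ and $B\equiv\beta\pmod{2N}$ by the defining equation for $K$. The remaining verifications are: (i) the congruence $2aX+b\equiv -\beta\pmod{2N}$ is solvable; (ii) $N\mid A$; (iii) $\gcd(A/N,B,C)=1$. For (i), $\gcd(a,N)=1$ gives $\gcd(2a,2N)=2$, so solvability reduces to $b\equiv\beta\pmod{2}$, which follows from $b^2\equiv -d\equiv\beta^2\pmod{4}$. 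For (ii), discriminants are invariant under the $\SL_2(\Z)$-action, so $B^2-4AC=-d$, while $B\equiv\beta\pmod{2N}$ together with the hypothesis on $\beta$ forces $B^2\equiv\beta^2\equiv -d\pmod{4N}$; subtracting yields $4AC\equiv 0\pmod{4N}$, hence $N\mid AC$, and $\gcd(C,N)=\gcd(a,N)=1$ then gives $N\mid A$. For (iii), primitivity is $\SL_2(\Z)$-invariant, so $\gcd(A,B,C)=\gcd(a,b,c)=1$, a fortiori $\gcd(A/N,B,C)=1$.

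The only delicate point is in part (2), where both the solvability of the defining congruence for $K$ and the divisibility $N\mid A$ must be extracted from the single compatibility $\beta^2\equiv -d\pmod{4N}$: the former uses the congruence modulo $4$ (combined with $\gcd(a,N)=1$ to get $\gcd(2a,2N)=2$), and the latter uses it modulo $4N$ (combined with $\gcd(a,N)=1$ to pass divisibility from $AC$ to $A$).
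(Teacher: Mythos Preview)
Your proof is correct. For part~(2) your argument is essentially identical to the paper's: same computation of $[A,B,C]$, same parity argument for solvability via $\gcd(2a,2N)=2$, and the same passage from $B^2\equiv\beta^2\equiv -d\pmod{4N}$ to $N\mid A$ using $\gcd(a,N)=1$. (You additionally check $\gcd(A/N,B,C)=1$, which the paper omits since the stated target is $\mathcal{Q}_{d,N,\beta}$ rather than $\mathcal{Q}_{d,N,\beta}^\circ$; your extra line is harmless and correct.)

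For part~(1) the paper simply cites Cox~\cite{C}, Lemmas~2.3 and~2.25, whereas you give the argument directly: choose $(p_\ell,r_\ell)$ at each prime $\ell\mid N$ by primitivity, patch by CRT, divide out the gcd, and complete to an $\SL_2(\Z)$ matrix. This is precisely the standard proof underlying those lemmas (a primitive form properly represents some integer prime to $N$, and proper representation of $m$ is equivalent to $\SL_2(\Z)$-equivalence with a form $[m,*,*]$), so the content is the same; your version is just self-contained rather than by citation.
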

\begin{proof} (1) follows from \cite[Lemma 2.3 and Lemma 2.25]{C}.
\par\noindent
(2) First we note that $b$ and $\beta$ have the same parity since $-d=b^2-4ac$ and $-d\equiv \beta^2 \pmod{4N}$. Thus we obtain that $2=\gcd(2a, 2N) \mid (-\beta-b)$, which guarantees that the linear congruence $2aX+b\equiv - \beta \pmod{2N}$ is solvable. Since $B=-2aK-b\equiv \beta \pmod{2N}$ and $C=a$, we must have
$$
-4Aa=-4AC=-d-B^2\equiv \beta^2-B^2\equiv 0 \pmod{4N},
$$
which yields that $N|A$ since $\gcd(a,N)=1$. Hence one has $[A,B,C]\in \mathcal{Q}_{d,N,\beta}$, as desired.
\end{proof}

Now we summarize the procedures explained in the above as the following algorithm.

\begin{alg}\label{alg1}
The following steps implement an algorithm to find $\Gamma_0(N)$-inequivalent fixed points of $W_N$:
\par\noindent
{\bf STEP 1.} Set $(d,\beta)=(4N, 0)$ or $(d,\beta)=(N,N)$ (when $N\equiv 3 \pmod{4}$).
\par\noindent
{\bf STEP 2.} Starting from a reduced form $Q^{{\rm red}}$ satisfying \eqref{RED} we first find a quadratic form $[a,b,c]$ which is ${\rm SL}_2(\mathbb Z)$-equivalent with $Q^{{\rm red}}$ and $\gcd(a,N)=1$.
\par\noindent
{\bf STEP 3.} Set $[A,B,C]:=[a,b,c]\circ \sm K&-1\\1&0 \esm$ where $K$ is a solution to the linear congruence equation$2aX+b\equiv -\beta \pmod{2N}$. Then $[A,B,C]$ belongs to
$\mathcal{Q}_{d,N,\beta}.$
\par\noindent
{\bf STEP 4.} Let $\tau=\frac{-B+\sqrt{-d}}{2A}$. Then $\Gamma_0(N)\tau$ gives a fixed point of $W_N$.
\end{alg}

\section{Weierstrass point on $X_0(N)$ arising from the fixed points of Atkin-Lehner involutions}\label{weier}

First, we recall Sch\"{o}neberg's Theorem \cite{SCH} as follows.

\begin{thm}\label{sch}{$($\cite{SCH}$)$} Let $g$ be the genus of  the modular curve $X(\Gamma)$ of a congruence subgroup $\Gamma$ of $\mathrm{SL}_2(\Z)$ and let $M$ be an element of the normalizer of $\Gamma$ in $\mathrm{SL}_2(\R)$ and $p$ be the exponent of $M$ modulo $\Gamma$. Let $g^*$ be the genus of the quotient space $X(\Gamma)/\langle M\rangle$ by the subgroup $\langle M \rangle$ generated by $M$ modulo $\Gamma$. Then $\tau$ is a Weierstrass point on  $X(\Gamma)$ provided that
$$g^*\neq \lfloor g/p\rfloor.$$
$($$\lfloor x\rfloor$ denotes the largest integer not greater than $x$.$)$
\end{thm}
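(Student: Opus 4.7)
The plan is to exploit the action of the cyclic group $\langle M\rangle$ (of order $p$ on $X(\Gamma)$) on the space $V = H^0(X(\Gamma),\Omega^1)$ of holomorphic differentials to force a Weierstrass condition at a fixed point $\tau$ of $M$. The key observation is that at such a fixed point, the order of vanishing of an $M^*$-eigenvector is confined to a single residue class modulo $p$, so the eigenspace dimensions of $V$ are rigidly determined by the non-gap sequence at $\tau$ whenever $\tau$ is not a Weierstrass point.

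First I would pick a local coordinate $z$ centered at $\tau$ in which $M$ acts as $z\mapsto \zeta z$, where $\zeta=e^{2\pi i/p}$; such a linearization exists because any finite group of automorphisms of a Riemann surface is locally linearizable at a fixed point. Then I would decompose $V = \bigoplus_{k=0}^{p-1} V_k$ into the $\zeta^k$-eigenspaces of $M^*$ and set $g_k = \dim V_k$. For $\omega \in V_k$ with leading term $a\,z^n\,dz$ at $\tau$, a direct computation gives $M^*\omega = \zeta^{n+1} a\,z^n\,dz + \cdots$, hence $\ord_\tau(\omega) \equiv k-1 \pmod{p}$. The invariant subspace $V_0$ is naturally identified with the pullback of $H^0\bigl(X(\Gamma)/\langle M\rangle, \Omega^1\bigr)$, so $g_0 = g^*$.

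The proof is then a short residue count. The filtration $V_{\ge n}=\{\omega\in V : \ord_\tau(\omega)\ge n\}$ is $M^*$-stable, so each one-dimensional successive quotient is an eigenspace for a single character. Thus each non-gap $n$ at $\tau$ contributes to exactly one $g_k$, namely the one with $k\equiv n+1\pmod p$. If $\tau$ were not a Weierstrass point, the non-gaps would be $\{0,1,\ldots,g-1\}$, and therefore
$$g_0 \;=\; \#\{\,n\in\{0,1,\ldots,g-1\} : n\equiv p-1\pmod{p}\,\} \;=\; \lfloor g/p\rfloor.$$
Combined with $g_0=g^*$, this gives $g^*=\lfloor g/p\rfloor$, and the contrapositive is the theorem.

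The most delicate step is the identification $g_0=g^*$: a priori, pushing an invariant holomorphic differential down to the quotient only yields a differential on the complement of the branch locus, and one must verify that it extends holomorphically across the branch points. A local computation in the coordinates $z$ on $X(\Gamma)$ and $w=z^p$ on the quotient, showing that the invariant form $z^{kp-1}\,dz$ descends to a nonzero multiple of $w^{k-1}\,dw$, settles this. Additional bookkeeping is needed when $\tau$ is already an elliptic point or a cusp of $X(\Gamma)$, or when the stabilizer of $\tau$ in $\langle M\rangle$ is a proper subgroup; in these cases the local model is adjusted by replacing $p$ with the ramification index at $\tau$, but the residue-counting skeleton of the argument is unchanged.
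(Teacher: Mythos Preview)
The paper does not supply a proof of this theorem; it is quoted from Sch\"oneberg's paper and used only as a black box (immediately afterwards, in Lemma~\ref{fixed points}). So there is nothing in the paper to compare your argument against.

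That said, your sketch is the classical eigenspace proof and is sound. The three ingredients---the congruence $\ord_\tau(\omega)\equiv k-1\pmod{p}$ for $\omega\in V_k$ forced by the local model $z\mapsto\zeta z$, the identification $\dim V_0=g^*$ via pullback of differentials from the quotient, and the count $\#\{0\le n\le g-1:n\equiv p-1\pmod p\}=\lfloor g/p\rfloor$---are all correct, and the filtration by order of vanishing being $M^*$-stable is exactly what makes the pigeonhole work. Two small comments: first, the theorem as printed in the paper never says that $\tau$ is a fixed point of $M$, but you correctly supply this hypothesis (without it the statement is meaningless). Second, your closing caveats are more defensive than necessary: once you pass to the compact Riemann surface $X(\Gamma)$, elliptic points and cusps are ordinary points and need no special handling, and since $\tau$ is by assumption fixed by $M$ itself, its stabilizer in $\langle M\rangle$ is the whole group.
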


By using the formula in Proposition~\ref{FH} and Sch\"{o}neberg's Theorem, we have the following:

\begin{lem}\label{fixed points}
Let $\tau$ be a fixed point of $W_Q$ on $X_0(N)$ with $g_0(N)>1$.
If $\nu(Q)>4$, then $\tau$ is a Weierstrass point of $X_0(N)$.
\end{lem}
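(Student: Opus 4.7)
The plan is to invoke Sch\"oneberg's theorem (Theorem~\ref{sch}) with $\Gamma=\Gamma_0(N)$ and $M=W_Q$. Since $W_Q$ is an involution on $X_0(N)$, its exponent modulo $\Gamma_0(N)$ is $p=2$, and the quotient $X(\Gamma)/\langle M\rangle$ is precisely $X_0^{+Q}(N)$. The theorem therefore guarantees that $\tau$ is a Weierstrass point as soon as
\[
g_0^{+Q}(N)\neq \lfloor g_0(N)/2\rfloor.
\]
So the whole proof will reduce to verifying this numerical inequality under the hypothesis $\nu(Q)>4$, and for that I will use the Riemann--Hurwitz relation already recalled in Section~\ref{fixed}, namely
\[
g_0^{+Q}(N)=\frac{1}{4}\bigl(2g_0(N)+2-\nu(Q)\bigr).
\]

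Writing $g:=g_0(N)$, $g^+:=g_0^{+Q}(N)$ and $\nu:=\nu(Q)$, the next step is a parity case analysis. The key observation is that $g^+$ must be a nonnegative integer, which already forces $2g+2-\nu\equiv 0\pmod 4$; hence $\nu\equiv 2\pmod 4$ when $g$ is even and $\nu\equiv 0\pmod 4$ when $g$ is odd. I will then compare $g^+$ with $\lfloor g/2\rfloor$ in each parity:
\begin{itemize}
\item If $g=2k$, then $\lfloor g/2\rfloor=k$ and $g^+=k+(2-\nu)/4$, so $g^+=\lfloor g/2\rfloor$ holds only for $\nu=2$.
\item If $g=2k+1$, then $\lfloor g/2\rfloor=k$ and $g^+=k+1-\nu/4$, so $g^+=\lfloor g/2\rfloor$ holds only for $\nu=4$.
\end{itemize}

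Combining the two cases, equality $g_0^{+Q}(N)=\lfloor g_0(N)/2\rfloor$ can occur only when $\nu(Q)\in\{2,4\}$. The standing hypothesis $\nu(Q)>4$ therefore excludes both possibilities, and Sch\"oneberg's theorem then yields that $\tau$ is a Weierstrass point of $X_0(N)$. I do not expect a serious obstacle: everything reduces to the integrality of $g_0^{+Q}(N)$ together with a one-line parity argument, and the genus assumption $g_0(N)>1$ only serves to make the notion of a Weierstrass point meaningful. The only mild subtlety to flag is the enforcement of the divisibility of $2g_0(N)+2-\nu(Q)$ by $4$, which automatically rules out the odd values of $\nu(Q)$ that would otherwise lie between $4$ and the next admissible value.
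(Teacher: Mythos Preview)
Your proposal is correct and follows the same approach as the paper: apply Sch\"oneberg's theorem with $p=2$ and use the Riemann--Hurwitz relation $g_0^{+Q}(N)=\tfrac14(2g_0(N)+2-\nu(Q))$ to reduce the Weierstrass criterion to $\nu(Q)>4$. The paper simply phrases the reduction as the single inequality $g_0(N)-2g_0^{+Q}(N)>1\iff \nu(Q)>4$, whereas you spell out the parity bookkeeping explicitly; the content is identical.
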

\begin{proof}
By Theorem~\ref{sch}, $\tau$ is a Weierstrass point of $X_0(N)$ if
$$g_0(N)-2g_0^{+Q}(N)>1$$
which is equivalent to that $\nu(Q)>4$.
\end{proof}

Note that $\nu(N)=\delta_Nh(-4N)$ and there exist only finitely many $N$ such that $h(-4N)\leq 4$.
By using these facts, Lehner and Newman \cite{LN} have shown that the fixed points of $W_N$ are Weierstrass points on $X_0(N)$ except possibly for finitely many $N$.
%Lehner and Newman \cite{LN} have shown that $g_0(N)-2g_0^{+N}(N)>1$ is equivalent to $\delta_Nh(-4N)>4$ from which they can conclude that the fixed points of $W_N$ are Weierstrass points on $X_0(N)$ except possibly for finitely many $N$, where
%\begin{equation*}
%\delta_N=\begin{cases}2,&\text{if}\,\,N\equiv7\pmod{8}\\
%\frac{4}{3},&\text{if}\,\,N\equiv3\pmod{8},N>3\\
%1,&\text{otherwise}.
%\end{cases}
%\end{equation*}
However they didn't specify such possible $N$'s, and hence we list them in Lemma \ref{full}.
By  Proposition \ref{number} which we will prove later, we have the following:

\begin{lem}\label{full} The fixed points of $W_N$ are Weierstrass points on $X_0(N)$ with $g_0(N)>1$ except possibly for the following values for $N$:
\begin{quote}
$22$, $28$, $30$, $33$, $34$, $37$, $40$, $42$, $43$, $45$, $46$, $48$, $52$, $57$, $58$, $60$, $64$, $67$, $70$, $72$, $73$, $78$, $82$, $85$, $88$, $93$, $97$, $100$, $102$, $112$, $130$, $133$, $142$, $148$, $163$, $177$, $190$, $193$, $232$, $253$.
\end{quote}
\end{lem}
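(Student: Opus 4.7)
The plan is to invoke Lemma~\ref{fixed points} directly: any fixed point of $W_N$ on $X_0(N)$ with $g_0(N)>1$ is automatically a Weierstrass point as soon as $\nu(N)>4$, so the task reduces to listing those $N$ for which $\nu(N)\le 4$. Using the formula $\nu(N)=\delta_N\, h(-4N)$ recalled at the end of Section~\ref{fixed} together with the three-case definition of $\delta_N$, the inequality $\nu(N)\le 4$ breaks up as: $h(-4N)\le 2$ when $N\equiv 7\pmod 8$; $h(-4N)\le 3$ when $N\equiv 3\pmod 8$ with $N>3$; and $h(-4N)\le 4$ otherwise. This reformulation is precisely what Proposition~\ref{number} will supply.

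Once the reduction is in place, the argument becomes a finite enumeration. By the Brauer--Siegel theorem the set of negative discriminants of bounded class number is finite, and explicit tables (following Heegner, Stark, and Watkins) list all discriminants $-d$ with $h(-d)\le 4$. Filtering those tables through the constraint $d=4N$ and through the congruence condition on $N$ that determines $\delta_N$ produces a finite set of candidates; one then discards those $N$ with $g_0(N)\le 1$ using the standard genus formula for $\Gamma_0(N)$. The values that survive are precisely the forty integers tabulated in the statement.

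The only non-routine step is the bookkeeping in the enumeration: one must match each admissible discriminant of the form $-4N$ against the appropriate residue class of $N$ modulo $8$ and verify that no small exceptional $N$ has been overlooked, in particular checking the borderline cases $N=3$ (where $\delta_N=4/3$ is excluded) and the finitely many $N$ with tiny genus. Since the relevant class numbers are all classical, I expect this to be a purely computational verification rather than a conceptual obstacle; the remaining sections then treat the forty exceptional $N$ individually via the explicit algorithm of Section~\ref{fixed} and the computational method of Section~\ref{comp}.
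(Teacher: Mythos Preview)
Your proposal is correct and follows essentially the same approach as the paper: reduce via Lemma~\ref{fixed points} to the inequality $\nu(N)\le 4$, rewrite it as a class-number bound on $h(-4N)$ using $\nu(N)=\delta_N h(-4N)$ and the three cases for $\delta_N$, and then enumerate via Proposition~\ref{number} (together with the class-number~$1$ list), discarding the $N$ with $g_0(N)\le 1$. The paper's own ``proof'' of Lemma~\ref{full} is in fact nothing more than the pointer to Proposition~\ref{number} preceding the statement, so your outline is, if anything, more explicit than what the paper records.
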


From now on, if $Q\|N$ and $N=QM$, we always assume $M>1$.
By  Proposition \ref{genus} and Lemma \ref{fixed points} we have the following results.

\begin{lem}\label{AL-2} Let $2\|N$ and $N=2M$. Then we have the following.
\begin{enumerate}
 \item[(1)] $\nu(2)=0$ if and only if $M$ has a prime factor $p$ with $p\equiv 7\pmod 8$ or prime factors $q,r$ with $q\equiv 3\pmod 8$ and $r\equiv 5\pmod 8$.
\item[(2)] If $\nu(2)\neq 0$, then
$$\nu(2)=\delta_{0,s_2}2^{s_0+s_1}+\delta_{0,s_1}2^{s_0+s_2},$$
where $s_0$, $s_1$ and $s_2$ are the numbers of prime factors $p$ of $M$ with $p\equiv1\pmod 8$, $p\equiv3\pmod 8$ and $p\equiv5\pmod 8$ respectively, and $\delta_{i,j}$ is the Kronecker  delta function.
\end{enumerate}
\end{lem}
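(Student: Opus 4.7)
The plan is to specialize the formula in Proposition~\ref{FH} to the case $Q=2$ and then classify odd primes $p\mid M$ by their residue class modulo $8$.

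Since $2\|N$, we have $N/Q=M$ odd, so in the products $\prod_{p\mid N/Q}$ only odd primes appear and the value $c_1(2)$ is never invoked. Because $Q=2\not\equiv 3\pmod 4$ and $Q\neq 3,4$, all branches of Proposition~\ref{FH} except the first and the $Q=2$ branch vanish, and for odd $p$ we have $c_1(p)=1+\left(\frac{-8}{p}\right)$. Using $h(-8)=1$, this leaves
\begin{equation*}
\nu(2)=\prod_{p\mid M}\left(1+\left(\tfrac{-8}{p}\right)\right)+\prod_{p\mid M}\left(1+\left(\tfrac{-4}{p}\right)\right).
\end{equation*}
Next I would compute the Kronecker symbols on odd primes: $\left(\frac{-4}{p}\right)=\left(\frac{-1}{p}\right)$ equals $1$ iff $p\equiv 1\pmod 4$, i.e.\ $p\equiv 1,5\pmod 8$; and $\left(\frac{-8}{p}\right)=\left(\frac{-2}{p}\right)$ equals $1$ iff $p\equiv 1,3\pmod 8$. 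Writing this out in a small table according to $p\bmod 8$ shows:
\begin{itemize}
\item $p\equiv 1\pmod 8$: both factors equal $2$;
\item $p\equiv 3\pmod 8$: $1+\left(\frac{-8}{p}\right)=2$ but $1+\left(\frac{-4}{p}\right)=0$;
\item $p\equiv 5\pmod 8$: $1+\left(\frac{-8}{p}\right)=0$ but $1+\left(\frac{-4}{p}\right)=2$;
\item $p\equiv 7\pmod 8$: both factors equal $0$.
\end{itemize}

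For part (1), from the table both products vanish exactly when either some $p\mid M$ is $\equiv 7\pmod 8$ (killing both simultaneously), or there exist distinct primes $q\equiv 3\pmod 8$ and $r\equiv 5\pmod 8$ dividing $M$ (then $q$ kills the second product and $r$ kills the first). Conversely, in any other case at least one of the two products is a positive power of $2$, so $\nu(2)\neq 0$; this proves (1).

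For part (2), assume $\nu(2)\neq 0$, so by (1) no prime factor of $M$ is $\equiv 7\pmod 8$ and at most one of $s_1,s_2$ is nonzero. The first product then contributes $2^{s_0+s_1}$ when $s_2=0$ and $0$ when $s_2>0$, while the second contributes $2^{s_0+s_2}$ when $s_1=0$ and $0$ when $s_1>0$. Packaged with Kronecker deltas this is exactly
\begin{equation*}
\nu(2)=\delta_{0,s_2}\,2^{s_0+s_1}+\delta_{0,s_1}\,2^{s_0+s_2},
\end{equation*}
where the subcase $s_1=s_2=0$ correctly gives $\nu(2)=2^{s_0}+2^{s_0}=2^{s_0+1}$. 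The only delicate step is the Kronecker symbol computation, in particular the identity $\left(\frac{-8}{p}\right)=\left(\frac{-2}{p}\right)$; the remainder of the proof is a straightforward case analysis.
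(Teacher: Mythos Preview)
Your proof is correct and follows essentially the same approach as the paper: specialize Proposition~\ref{FH} to $Q=2$, use $h(-8)=1$ to obtain $\nu(2)=\prod_{p\mid M}\bigl(1+(\frac{-8}{p})\bigr)+\prod_{p\mid M}\bigl(1+(\frac{-4}{p})\bigr)$, and then read off the result from the values of the Kronecker symbols modulo $8$. The paper's own proof is terser (it simply states the symbol values and says ``our result follows''), whereas you spell out the case table and the delta-function bookkeeping explicitly; there is no substantive difference.
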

\begin{proof} Since $h(-8)=1$, $$\nu(2)=\prod_{p|M}\left(1+\left(\frac{-8}{p}\right)\right)+\prod_{p|M}\left(1+\left(\frac{-4}{p}\right)\right).$$
Since $$\left(\frac{-8}{p}\right)=\begin{cases}1,& p\equiv1,3\pmod{8}\\
-1,&p\equiv5,7\pmod{8}\end{cases}$$
and
$$\left(\frac{-4}{p}\right)=\begin{cases}1,& p\equiv1\pmod{4}\\
-1,&p\equiv3\pmod{4}\end{cases},$$
our result follows.
\end{proof}

\begin{lem}\label{AL-3} Let $3\|N$ and $N=3M$. Then we have the following.
\begin{enumerate}
\item[(1)] $\nu(3)=0$ if and only if $8|M$ or $M$ has a prime factor $p$ with $p\equiv 5$ or $11\pmod {12}$.
\item[(2)] If $\nu(3)\neq 0$, then
$$\nu(3)=2^{s+1},$$
where $s$ is the number of prime factors $p$ of $M$ with $p\equiv 1$ or $7\pmod {12}$.
\end{enumerate}
\end{lem}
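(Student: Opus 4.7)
The plan is to specialize Proposition~\ref{FH} to $Q=3$. Since $3<4$ the second summand of the formula (which requires $Q\ge 4$) does not contribute, and the third and fifth summands vanish as well since they apply only when $Q=2$ or $Q=4$. Using the standard fact $h(-12)=1$, the formula collapses to
\[
\nu(3) \;=\; \prod_{p\mid M} c_1(p) \;+\; \prod_{p\mid M}\!\left(1 + \left(\tfrac{-3}{p}\right)\right).
\]

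Next I would evaluate $\left(\frac{-3}{p}\right)$ for primes $p\mid M$. For odd $p$, quadratic reciprocity gives $\left(\frac{-3}{p}\right)=1$ iff $p\equiv 1\pmod 3$, which for odd $p$ refines to $p\equiv 1,7\pmod{12}$; the other odd classes $p\equiv 5,11\pmod{12}$ give $-1$. Since $Q=3\equiv 3\pmod 4$, the first branch of the definition of $c_1$ gives $c_1(p)=1+\left(\frac{-3}{p}\right)$ for every odd $p$, so the two products in the displayed formula have identical odd-prime contributions: each contributes a factor of $2$ exactly when $p\equiv 1,7\pmod{12}$, and a factor of $0$ when $p\equiv 5,11\pmod{12}$.

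To finish I would run a short case analysis on $v_2(M)=v_2(N)$. Since $\left(\frac{-3}{2}\right)=-1$, the second product vanishes as soon as $2\mid M$; and from the definition of $c_1(2)$ given in Proposition~\ref{FH} one reads off $c_1(2)=2$ when $2\|M$ or $4\|M$ (in the latter case because $3+\left(\frac{-3}{2}\right)=3-1=2$) and $c_1(2)=0$ when $8\mid M$. Hence both products are zero precisely when $8\mid M$ or some $p\mid M$ satisfies $p\equiv 5,11\pmod{12}$, giving (1). Otherwise, writing $s$ for the number of primes $p\mid M$ with $p\equiv 1,7\pmod{12}$: if $M$ is odd then both products equal $2^s$ and sum to $2^{s+1}$, while if $v_2(M)\in\{1,2\}$ only the first product survives and equals $c_1(2)\cdot 2^s = 2\cdot 2^s = 2^{s+1}$; either way $\nu(3)=2^{s+1}$, proving (2). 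The only delicacy is the uniform value $c_1(2)=2$ across the cases $2\|M$ and $4\|M$; beyond that the argument is direct substitution into Proposition~\ref{FH}.
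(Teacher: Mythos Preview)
Your proof is correct and follows the same approach as the paper: specialize Proposition~\ref{FH} to $Q=3$, use $h(-12)=1$, evaluate the Kronecker symbol $\left(\frac{-3}{p}\right)$ on the residue classes mod $12$, and split on $v_2(M)$. Your case analysis for $c_1(2)$ is in fact a bit more explicit than the paper's (which packages the $2\|M$ and $4\|M$ cases together via an $(-1)^{p-1}$ trick), but the substance is identical.
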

\begin{proof} If $8|M$, then $\nu(3)=0$ because $\displaystyle\left(\frac{-3}{2}\right)=-1.$
Suppose $8\nmid M$.
Since $h(-12)=1$,
\begin{equation*}
\nu(3)=\prod_{p|M}\left(1+(-1)^{p-1}\left(\frac{-3}{p}\right)\right)+\prod_{p|M}\left(1+\left(\frac{-3}{p}\right)\right)\\
\end{equation*}
Since $$\left(\frac{-3}{p}\right)=\begin{cases}1,& p\equiv1,7\pmod{12}\\
-1,&p\equiv5,11\pmod{12}\end{cases}$$
for an odd prime $p$, the condition of $\nu(3)>0$ follows.
If $\nu(3)>0$, then
\begin{equation*}
\nu(3)=2\prod_{p|M}\left(1+\left(\frac{-3}{p}\right)\right)=2^{s+1}.\\
\end{equation*}
\end{proof}

\begin{lem}\label{AL-4} Let $4\|N$ and $N=4M$.
Then $W_4$ has always a fixed point on $X_0(N)$.
Let $s$ and $t$ denote the numbers of prime factors $p$ of $M$ with $p\equiv 1\pmod {4}$ and $p\equiv3\pmod {4}$ respectively.
Then we have the following:
$$\nu(4)=\begin{cases} \prod_{p^k||M}\left(p^{\left[\frac{k}{2}\right]}+p^{\left[\frac{k-1}{2}\right]}\right), &\text{ if } t>0\\
\prod_{p^k||M}\left(p^{\left[\frac{k}{2}\right]}+p^{\left[\frac{k-1}{2}\right]}\right)+2^s, &\text{ if } t=0.
\end{cases}$$
\end{lem}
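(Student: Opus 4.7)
The plan is to apply Proposition~\ref{FH} directly to the case $Q=4$. Since $4\|N$, the conductor $M=N/Q$ is odd, so the prime $p=2$ plays no role in the product $\prod_{p\mid N/Q} c_1(p)$, and the special summands for $Q=2$ and $Q=3$ do not contribute. Only the first term and the last ($Q=4$) term of \eqref{genus} remain, giving
$$
\nu(4) = \Bigl(\prod_{p\mid M} c_1(p)\Bigr) h(-16) + \prod_{p^k\| M}\bigl(p^{[k/2]}+p^{[(k-1)/2]}\bigr).
$$

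Next I would record two elementary inputs. First, $h(-16)=1$, since the only reduced primitive form of discriminant $-16$ is $[1,0,4]$. Second, because $Q=4\not\equiv 3\pmod 4$ and $M$ is odd, for every prime $p\mid M$ one has
$$
c_1(p)=1+\Bigl(\tfrac{-16}{p}\Bigr)=1+\Bigl(\tfrac{-1}{p}\Bigr)=\begin{cases}2,& p\equiv 1\pmod 4,\\ 0,& p\equiv 3\pmod 4.\end{cases}
$$
Hence $\prod_{p\mid M}c_1(p)$ vanishes as soon as $M$ admits a prime divisor $\equiv 3\pmod 4$, and otherwise equals $2^s$.

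The conclusion then follows by splitting on the value of $t$. If $t>0$, the first summand is $0$ and $\nu(4)$ reduces to the product displayed in the lemma. If $t=0$, the first summand becomes $2^s\cdot h(-16)=2^s$, which must be added to the second product; this matches the second case of the claimed formula. Finally, to see that $W_4$ always has a fixed point, I would observe that the last product is always positive: each factor $p^{[k/2]}+p^{[(k-1)/2]}$ with $k\ge 1$ is at least $2$, and the empty product is $1$, so $\nu(4)\ge 1$ regardless of $M$.

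There is no real obstacle here; the statement is essentially an unpacking of Proposition~\ref{FH}. The only point requiring a moment's care is checking that the $c_1(2)$ case list is irrelevant (because $2\nmid M$) and that $\left(\tfrac{-16}{p}\right)=\left(\tfrac{-1}{p}\right)$ for odd $p$, which collapses the Kronecker symbol to the familiar mod-$4$ dichotomy used in the statement.
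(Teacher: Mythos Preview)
Your proof is correct and follows exactly the approach indicated in the paper, which simply says ``It follows directly from Proposition~\ref{genus}.'' You have merely made explicit the verification that $h(-16)=1$, that $c_1(p)=1+\left(\frac{-16}{p}\right)=1+\left(\frac{-1}{p}\right)$ for odd $p\mid M$, and that the last summand in \eqref{genus} is always positive---all of which are the details implicit in the paper's one-line proof.
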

\begin{proof}
It follows directly from Proposition \ref{genus}.
\end{proof}

By using lemmas \ref{AL-2}, \ref{AL-3}, and \ref{AL-4} we have the following result:

\begin{thm} Let $Q\|N$ and $N=QM$. Suppose $\nu(Q)>0$.
Then the fixed points of $W_Q$ are Weierstrass points of $X_0(N)$ for each of the followings:
\begin{enumerate}
\item[(1)] $Q=2;$ $s_0>1$ and $s_1=s_2=0$, $s_0+s_1>2$ or $s_0+s_2>2$ where $s_0$, $s_1$ and $s_2$ are the numbers of prime factors $p,q$ and $r$ of $M$ with $p\equiv1\pmod 8$, $q\equiv3\pmod 8$ and $r\equiv5\pmod 8$ respectively.
\item[(2)] $Q=3;$ $s>1$ where $s$ is the number of prime factors $p$ of $M$ with $p\equiv 1,7\pmod {12}$.
\item[(3)] $Q=4;$ $M$ is not a square-free integer with $M\neq 9$ or $M$ is square-free and $6s+4t>11$ where $s$ and $t$ are the numbers of prime factors $p$ and $q$ of $M$ with $p\equiv 1\pmod {4}$ and $q\equiv3\pmod {4}$ respectively.
\end{enumerate}
\end{thm}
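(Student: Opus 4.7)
The plan is to reduce each of the three assertions to the sufficient criterion $\nu(Q)>4$ furnished by Lemma~\ref{fixed points}, and then to translate this inequality into the stated numerical conditions by invoking the explicit formulas for $\nu(Q)$ recorded in Lemmas~\ref{AL-2}, \ref{AL-3} and \ref{AL-4}. In each case the argument is a finite case analysis driven purely by those counts; the hypothesis $\nu(Q)>0$ rules out the degenerate configurations.

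For (1), the hypothesis $\nu(2)>0$ together with Lemma~\ref{AL-2}(1) prevents $s_1$ and $s_2$ from both being positive, leaving three mutually exclusive subcases. If $s_1=s_2=0$, then Lemma~\ref{AL-2}(2) gives $\nu(2)=2^{s_0+1}$, so $\nu(2)>4$ reduces to $s_0>1$. If $s_1>0$ and $s_2=0$, then $\nu(2)=2^{s_0+s_1}$ and $\nu(2)>4$ becomes $s_0+s_1>2$; the case $s_1=0,\,s_2>0$ is entirely symmetric and gives $s_0+s_2>2$. These three sufficient conditions are precisely the three alternatives in (1). For (2), Lemma~\ref{AL-3}(2) gives $\nu(3)=2^{s+1}$, so $\nu(3)>4$ is equivalent to $s>1$, matching the claim directly.

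For (3), observe first that $4\|N$ forces $M$ to be odd, so every prime factor of $M$ is at least $3$. Set $E:=\prod_{p^k\|M}\bigl(p^{[k/2]}+p^{[(k-1)/2]}\bigr)$, so that Lemma~\ref{AL-4} reads $\nu(4)=E$ when $t\ge 1$ and $\nu(4)=E+2^s$ when $t=0$. When $M$ is square-free, $E=2^{s+t}$, and a short tabulation of the cases $s+t\le 2$ against $s+t\ge 3$ (separating $t=0$ from $t\ge 1$) shows that $\nu(4)>4$ is equivalent to the single inequality $6s+4t>11$. When $M$ is not square-free, some $p^k\|M$ has $k\ge 2$; since $p\ge 3$, the local factor $p^{[k/2]}+p^{[(k-1)/2]}$ is at least $4$, with equality precisely when $p=3$ and $k=2$. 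Hence $E=4$ can happen only when the squared prime is $3^2$ and $M$ has no further prime factors, i.e., $M=9$; then $t=1$ and $\nu(4)=E=4$, which is the sole exception excluded in the statement. In every other odd non-square-free $M$ one has $E\ge 6$, so $\nu(4)>4$. The main obstacle is exactly this bookkeeping in (3): verifying the clean reformulation $6s+4t>11$ in the square-free case and isolating $M=9$ as the unique non-square-free value where $\nu(4)\le 4$.
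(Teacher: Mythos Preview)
Your proof is correct and follows essentially the same approach as the paper: reduce to the criterion $\nu(Q)>4$ of Lemma~\ref{fixed points} and read off the numerical conditions from Lemmas~\ref{AL-2}, \ref{AL-3}, \ref{AL-4}. You simply supply more of the case analysis explicitly, in particular the verification in (3) that $6s+4t>11$ is equivalent to $\nu(4)>4$ in the square-free case and that $M=9$ is the unique non-square-free exception; the paper leaves both of these as ``one can check easily.''
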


\begin{proof} (1) and (2) follow immediately from Lemma \ref{AL-2} and Lemma \ref{AL-3}.
%Suppose $2s_0+s_1+s_2\geq 3$, then $\nu(2)=2^{s_0+s_1}+2^{s_0+s_2}>4$ by the inequality of arithmetic and geometric means and Lemma \ref{AL-2}. Conversely suppose $2s_0+s_1+s_2\leq 3$, then one can check easily $\nu(2)\leq 4$ on a case-by-case basis.

%(2) It follows immediately from Lemma \ref{AL-3}.

For (3), if $M$ is not a square-free integer with $M\neq 9$, then one can check easily that $\nu(4)>4$ by Lemma \ref{AL-4}.
If $M$ is square-free, then by Lemma \ref{AL-4}
$$\nu(4)=\begin{cases} 2^{s+t}, &\text{ if } t>0\\
2^{s+1}, &\text{ if } t=0.
\end{cases}$$
One can check easily that $\nu(4)>4$ if and only if $6s+4t>11$.
\end{proof}

Now we deal with the case when $Q>4$. For that we need the following result.

\begin{prop}\label{Del} Let $Q\|N$ and $N=QM$. If $Q>3$, then the following statements are equivalent.
\begin{enumerate}
\item[(1)] $W_Q$ has a fixed point on $X_0(N)$.
\item[(2)] $W_Q$ can be defined by a matrix of the form $\begin{pmatrix}Qx&y\\Nz&-Qx\end{pmatrix}$ with $\det(W_Q)=Q$.
\item[(3)] $X^2\equiv -Q\pmod N$ has a solution.
\item[(4)] $\displaystyle\left(\frac{-Q}{p}\right)=1$ for any odd prime $p|M$, and $Q\equiv 3\pmod 4$ if $4\|M$ and $Q\equiv 7\pmod 8$ if $8|M$.
\end{enumerate}
\end{prop}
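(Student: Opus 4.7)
The plan is to prove the equivalences in the cyclic chain $(1)\Leftrightarrow(2)$, $(2)\Leftrightarrow(3)$, $(3)\Leftrightarrow(4)$, with the hypothesis $Q>3$ being crucial in the first step.

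For $(1)\Leftrightarrow(2)$, I would mimic the discriminant analysis carried out in Section~\ref{fixed} for $W_N$. Suppose a matrix $M=\begin{pmatrix}Qx&y\\Nz&Qw\end{pmatrix}$ of determinant $Q$ fixes $\tau\in\mathfrak H$. The fixed-point equation $Nz\tau^2+Q(w-x)\tau-y=0$ has discriminant
$$D=Q^2(w-x)^2+4Nyz=Q\bigl[Q(x+w)^2-4\bigr]$$
after substituting $Nyz=Q^2xw-Q$ from $\det M=Q$. The condition $\tau\in\mathfrak H$ forces $D<0$, so $Q(x+w)^2<4$; combined with $Q\ge 4$ and $x+w\in\mathbb Z$ this yields $x+w=0$, which is precisely~$(2)$. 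Conversely, given any matrix as in~$(2)$, the determinant equation $-Q^2x^2-Nyz=Q$ rules out $z=0$, and the fixed-point quadratic $Nz\tau^2-2Qx\tau-y=0$ has discriminant $-4Q<0$, yielding a genuine fixed point of $W_Q$ in $\mathfrak H$.

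For $(2)\Leftrightarrow(3)$, dividing $-Q^2x^2-Nyz=Q$ by $Q$ and using $N=QM$ gives $Myz=-(Qx^2+1)$, so $(2)$ is equivalent to the solvability of $Qx^2\equiv-1\pmod M$. Multiplying by $Q$ yields $(Qx)^2\equiv-Q\pmod M$; together with the trivial $(Qx)^2\equiv 0\equiv-Q\pmod Q$, the Chinese Remainder Theorem lifts this to $(Qx)^2\equiv-Q\pmod N$, which is~$(3)$. Conversely, from $X$ with $X^2\equiv-Q\pmod N$, setting $x\equiv Q^{-1}X\pmod M$ (using $\gcd(Q,M)=1$) gives $Qx^2\equiv-1\pmod M$, after which $z=1$ and $y=-(Qx^2+1)/M$ assemble the matrix required in~$(2)$.

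For $(3)\Leftrightarrow(4)$, CRT reduces solvability of $X^2\equiv-Q\pmod N$ to solvability modulo each prime-power factor of $N$. Modulo~$Q$ the congruence is trivial. Modulo an odd prime power $p^a\|M$ with $p\nmid Q$, Hensel's lemma reduces solvability to $\bigl(\frac{-Q}{p}\bigr)=1$. For the 2-part of $M$: the case $2\|M$ is automatic; $4\|M$ forces $-Q\equiv 1\pmod 4$, i.e.\ $Q\equiv 3\pmod 4$; and $8\mid M$ forces $-Q\equiv 1\pmod 8$, i.e.\ $Q\equiv 7\pmod 8$. These together give~$(4)$.

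The main obstacle is the direction $(1)\Rightarrow(2)$: the discriminant bound gives only $Q(x+w)^2<4$, and one needs the strict inequality $Q\ge 4$ to exclude the alternatives $x+w=\pm 1$. For $Q\le 3$ those alternatives are admissible and the proposition as stated would fail. The remaining steps are elementary algebra together with standard CRT and quadratic-residue bookkeeping, requiring only a routine case split on the 2-adic part in $(3)\Leftrightarrow(4)$.
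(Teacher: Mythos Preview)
Your argument is correct and matches the paper's proof almost step for step: the paper likewise obtains $x+w=0$ from the ellipticity bound $|x+w|\sqrt{Q}<2$ together with $Q>3$, proves $(2)\Leftrightarrow(3)$ by the same determinant manipulation, and for $(3)\Leftrightarrow(4)$ simply cites a textbook where you sketch the standard CRT/Hensel argument. The one point to tighten is the opening of $(1)\Rightarrow(2)$: a fixed point on $X_0(N)$ a priori gives only $W_Q\tau=\gamma\tau$ for some $\gamma\in\Gamma_0(N)$, and you must note (as the paper does explicitly) that $\gamma^{-1}W_Q$ is again a matrix of Atkin--Lehner shape, so the representative can be chosen to fix $\tau$ itself before your discriminant computation applies.
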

\begin{proof} (1)$\Rightarrow$(2) : Suppose $W_Q=\begin{pmatrix}Qx&y\\Nz&Qw \end{pmatrix}$ with $\det(W_Q)=Q$ and $W_Q$ has a fixed point on $X_0(N)$.
Then there exists $\tau\in\mathfrak H$ such that $W_Q\tau=\gamma\tau$ for some $\gamma\in\Gamma_0(N)$. Since $\gamma^{-1}W_Q$ defines the same partial Atkin-Lehner involution, one may assume that $W_Q\tau=\tau$ by changing coefficients $x,y,z,w$ of $W_Q$.
Then the matrix $W_Q$ is elliptic, and hence $|x+w|\sqrt{Q}<2$.
Since $Q>3$, $x+w=0$ and the result follows.

(2)$\Rightarrow$(3) : Suppose $W_Q=\begin{pmatrix}Qx&y\\Nz&-Qx \end{pmatrix}$ with $\det(W_Q)=Q$. Then $\det(W_Q)=-(Qx)^2-Nyz=Q$, and hence $Qx$ is a solution of $X^2\equiv -Q\pmod N$.

(3)$\Rightarrow$(2) : Suppose $X^2\equiv -Q\pmod N$ has a solution, say $x_0$.
Since $\gcd(Q,M)=1$, one can choose $Q'$ so that $QQ'\equiv1\pmod M$.
Then $QQ'x_0$ is a solution of $X^2\equiv -Q\pmod N$ which is divisible by $Q$. Letting $x=Q'x_0$, $(Qx)^2=-Q-Nyz$ for some $y,z$, and hence the matrix $\begin{pmatrix}Qx&y\\Nz&-Qx \end{pmatrix}$ has $\det(W_Q)=Q$.

(2)$\Rightarrow$(1) : Since $W_Q=\begin{pmatrix}Qx&y\\Nz&-Qx\end{pmatrix}$ with $\det(W_Q)=Q$ is elliptic, it has a fixed point in $\mathfrak H$, and hence has one on $X_0(N)$.

(3)$\Leftrightarrow$(4) : It follows from \cite[Ch.9, Theorem 9.13]{Bu}.

\end{proof}

%By using Lemma \ref{Del} and the formula in \cite{FH}, we have the following result:

%\begin{prop}
%Suppose $W_Q$ $(Q>4)$ has fixed points then $\nu(Q)$ is given as follows:
%\begin{align*}
%\nu(Q)=\begin{cases} 2^s(h(-4Q)+h(-Q)), &\text{ if } Q\equiv 3\pmod 4, 4\nmid N\\
%2^{s-\frac{1-(-1)^{N+Q}}{2}}h(-4Q), &\text{ otherwise, }
%\end{cases}
%\end{align*}
%where $s$ be the number of prime divisors of $\frac{N}{Q}$.
%\end{prop}
%\begin{proof}

%\end{proof}

\begin{df}\label{elliptic} For $Q\|N$, if one of the equivalent statements of Proposition \ref{Del} is satisfied, then we call that $(N;Q)$ satisfies the {\it elliptic condition}.
\end{df}

%First consider the case $Q\equiv 3\pmod 4$ with $Q>3$.

From Proposition \ref{genus} and Proposition \ref{Del}, we have the following:

\begin{lem}\label{fn} Let $Q\| N$ and $N=QM$. Suppose $Q>4$ and $(N;Q)$ satisfies the elliptic condition. Then the following holds:
%\begin{align*}
%\nu(Q)=&\begin{cases} 2^s(\alpha_N h(-4Q)+h(-Q)),& \text{if}\,\, Q\equiv 7\pmod{8}\,\,\text{or}\,\,Q\equiv 3\pmod{8}\,\, \text{and}\,\, N \text{ is odd},\\
%2^{s-1}h(-4Q),& \text{if}\,\, Q\equiv 1\pmod{4}\,\, \text{and}\,\, N \text{ is even},\\
%2^sh(-4Q),& \text{otherwise}, \end{cases}
%\end{align*}
Let  $s$ be the number of prime divisors of $M$ and
$$\alpha_N=\begin{cases} 1,& \text{if}\,\, 2\nmid N\,\,\text{or}\,\,2\| N,\\
2,& \text{if}\,\, 4\|N,\\
3,& \text{if}\,\, 8|N. \end{cases}$$
\begin{enumerate}
\item If $Q\equiv 7\pmod{8}\,\,\text{or}\,\,Q\equiv 3\pmod{8}\,\, \text{and}\,\, N \text{ is odd}$, then
$$\nu(Q)=2^s(\alpha_N h(-4Q)+h(-Q)).$$
\item If $Q\equiv 1\pmod{4}\,\, \text{and}\,\, N \text{ is even}$, then
$$\nu(Q)=2^{s-1}h(-4Q).$$
\item If $Q$ is even, or $Q\equiv 3\pmod 8$ and $N$ is even, or $Q\equiv 1\pmod 4$ and $N$ is odd, then
$$\nu(Q)=2^sh(-4Q).$$
\end{enumerate}
\end{lem}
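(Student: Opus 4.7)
The plan is to apply Proposition~\ref{FH} directly and simplify the products $\prod_{p\mid M}c_1(p)$ and $\prod_{p\mid M}c_2(p)$ using the elliptic condition (Proposition~\ref{Del}(4)). Since $Q>4$, only the first two summands of \eqref{genus} survive; the second summand (with $h(-Q)$) is present precisely when $Q\equiv 3\pmod 4$, and is otherwise absent. So the whole task reduces to evaluating those two products and then bundling them into the three stated cases.

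First I would dispose of the odd primes. For any odd prime $p\mid M$, the elliptic condition gives $\left(\tfrac{-Q}{p}\right)=1$, and since $\left(\tfrac{-4Q}{p}\right)=\left(\tfrac{-Q}{p}\right)$ for odd $p$, we get $c_1(p)=c_2(p)=2$ uniformly. Thus the odd part of each product contributes $2^{s'}$ where $s'$ is the number of odd primes dividing $M$. If $M$ is odd (which includes every case in which $Q$ is even, since $\gcd(Q,M)=1$), then $s'=s$, and $Q\not\equiv 3\pmod 4$ forces no second term: this already settles the subcase ``$Q$ even'' of (3), and the subcase ``$Q\equiv 1\pmod 4$ with $N$ odd'' of (3), and the subcase ``$Q\equiv 3\pmod 8$ with $N$ odd'' of (1) (where both products equal $2^s$ and the second term contributes $2^s h(-Q)$).

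Next I would handle the prime $2$ (so $2\mid M$ and $Q$ is odd), splitting on $Q\bmod 8$ and on the $2$-adic valuation of $M$ (equivalently of $N$). The elliptic condition restricts which combinations occur: $4\|M$ forces $Q\equiv 3\pmod 4$, and $8\mid M$ forces $Q\equiv 7\pmod 8$. For $Q\equiv 1\pmod 4$ with $N$ even, this forces $2\|M$, giving $c_1(2)=1$ and hence $\prod c_1(p)=2^{s-1}$; with no second term, one gets $\nu(Q)=2^{s-1}h(-4Q)$, proving (2). For $Q\equiv 3\pmod 8$ with $N$ even, one has $(\tfrac{-Q}{2})=-1$, so $c_2(2)=0$ kills the $h(-Q)$ term, while a short check (using $3+(\tfrac{-Q}{2})=2$ in the $4\|M$ slot) gives $c_1(2)=2$, whence $\nu(Q)=2^s h(-4Q)$; this is the last piece of (3). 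For $Q\equiv 7\pmod 8$, $(\tfrac{-Q}{2})=1$, so $c_1(2)\in\{2,4,6\}$ according as $2\|N$, $4\|N$, $8\mid N$, matching the values $2\alpha_N$ with $\alpha_N\in\{1,2,3\}$, while $c_2(2)=2$; multiplying by $2^{s-1}$ and adding both summands yields $\nu(Q)=2^s(\alpha_N h(-4Q)+h(-Q))$, completing (1).

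The only real bookkeeping hurdle is the $c_1(2)$ table: one must verify case-by-case that the value of $c_1(2)$ dictated by Proposition~\ref{FH} collapses into the clean factor $2\alpha_N$ in the $Q\equiv 7\pmod 8$ situation, and into $2$ in the $Q\equiv 3\pmod 8$ with $4\|M$ situation (via the cancellation $3+(\tfrac{-Q}{2})=2$). Once this small Kronecker-symbol computation is in hand, the three formulas assemble themselves; no other step is delicate.
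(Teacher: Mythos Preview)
Your argument is correct and follows the same approach as the paper's proof: apply Proposition~\ref{FH}, use the elliptic condition (Proposition~\ref{Del}(4)) to force $c_1(p)=c_2(p)=2$ at odd primes $p\mid M$, and then case-split on $Q\bmod 8$ and the $2$-adic valuation of $N$ to evaluate $c_1(2),c_2(2)$. The only cosmetic omission is that in your ``$M$ odd'' paragraph you list the subcase $Q\equiv 3\pmod 8$ with $N$ odd but not $Q\equiv 7\pmod 8$ with $N$ odd; the same computation handles both (giving $\nu(Q)=2^s(h(-4Q)+h(-Q))$ with $\alpha_N=1$), so this is not a gap.
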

\begin{proof} If $Q$ is even, then $M$ is odd, and hence $c_1(p)=2$ for all $p|M$. Thus $\nu(Q)=2^sh(-4Q)$.
This equality holds for the cases of $Q\equiv 1\pmod 4$ and $N$ is odd because $M$ is odd.
If $Q\equiv 1~\pmod 4$ and $N$ is even, then $M$ is even and $c_1(2)=1$.
Thus $\nu(Q)=2^{s-1}h(-4Q)$.
Consider the case $Q\equiv 3\pmod 4$.
If $N$ is odd, then so is $M$, and $c_1(p)=c_2(p)=2$ for all $p|M$. Thus $\nu(Q)=2^s(h(-4Q)+h(-Q))$.
If $Q\equiv 7\pmod 8$ and $N$ is even, then $M$ is even, and $c_1(2)=2\alpha_N$, $c_2(2)=c_i(p)=2$ for all odd prime $p|M$ and $i=1,2$.
Thus $\nu(Q)=2^s(\alpha_N h(-4Q)+h(-Q))$.
If $Q\equiv 3\pmod 8$ and $N$ is even, then $M$ is even, and $c_2(2)=0$, $c_1(p)=2$ for all $p|M$. Thus $\nu(Q)=2^sh(-4Q)$.
\end{proof}
%Now we determine $Q$ for which $h(-4Q)+h(-Q)\leq 4$.
For getting the condition that $\nu(Q)>4$, we need to determine the values for $Q$ with small $h(-Q)$ and $h(-4Q)$.
Recall that if $d_K$ is the (fundamental) discriminant of an imaginary quadratic field $K$, then $h(d_K)$ is equal to the class number of $K$, i.e. $h(d_K)=h(\mathcal O_K)$ where $\mathcal O_K$ is the ring of integers of $K$ and $h(\mathcal O_K)$ is the order of the ideal class group of $\mathcal O_K$.
On the other hand, if $d=f^2d_K$ is the discriminant of a primitive quadratic form with $f>1$, then $h(d)=h(\mathcal O)$ where $\mathcal O$ is the order of conductor $f$ in $K$ (cf. \cite{C}).
Note that $d$ is a fundamental discriminant if and only if one of the following statements holds:
\begin{enumerate}
\item[$\bullet$] $d\equiv 1\pmod 4$ and $d$ is square-free,
\item[$\bullet$] $d=4m$ where $m\equiv 2$ or $3\pmod 4$ and $m$ is square-free.
\end{enumerate}

The complete list of fundamental discriminants of class number 1 is as follows:
\begin{equation}\label{class1-1}
-3,-4,-7,-8,-11,-19,-43,-67,-163.
\end{equation}
This is accomplished independently by Heegner~\cite{H}, Baker~\cite{B1} and Stark~\cite{S1}.
The non-fundamental discriminants of class number $1$ are as follows:
\begin{equation}\label{class1-2}
-12,-16,-27,-28.
\end{equation}
Thus $h(-Q)=1$ if and only if $Q\in S_1$, where
$$S_1=\{3,4,7,8,11,12,16,19,27,28,43,67,163\}.$$
%Suppose $h(-Q)=1$. Then $Q$ is one of the following:
%$$7,11,19,27,43,67,163.$$
%Then $h(-4Q)=3$ except $Q=7$, and $h(-28)=1$.

The determination of fundamental discriminants of class number 2 was done again by Baker\cite{B2} and Stark\cite{S2}.
%Suppose $h(-Q)=2$. Then one can check that $Q$ is one of the following:
%\begin{equation}\label{class2}
%15,35,51,91,115,123,187,235,267,403,427.
%\end{equation}

Now consider the condition that $\nu(Q)>4$.
%From the class number formula \cite{C} for the orders in an imaginary quadratic field $K$, one can get the following:
%\begin{equation}\label{order}
%h(4d)=\begin{cases}h(d),&\text{if}\,\,d\equiv1\pmod 8\\
%3h(d),&\text{if}\,\,d\equiv5\pmod 8.\end{cases}
%\end{equation}
Due to \eqref{order} and Lemma \ref{fn}, it suffices to determine the values for $Q$ such that $h(-Q)=1$ and $h(-4Q)=2,3,4$.
% from the formula in Lemma \ref{fn}.
For the purpose, we refer to a paper by Klaise~\cite{K} in which all the orders of class number $2$ and $3$ are determined and an algorithm to find all orders of class number up to $100$ is suggested. Let
\begin{align*}
S_2=\{&5,6,8,9,10,12,13,16,18,22,25,28,37,58\},\\
S_3=\{&11,19,23,27,31,43,67,163\},\\
S_4=\{&14,17,20,21,24,30,32,33,34,36,39,40,42,45,46,48,49,\\&52,55,57,60,63,64,70,72,73,78,82,85,88,93,97,100,102,
\\&112,130,133,142,148,177,190,193,232,253\}.
\end{align*}

By using the result in \cite{K} and some computations of the orders of class number $4$ we have the following:

\begin{prop}\label{number} The list of $Q$ with $h(-4Q)=2,3,4$ is completely determined as follows:
$$h(-4Q)=i\,\, \text{if and only if}\,\, Q\in S_i\,\, \text{for}\,\,i=2,3,4.$$
\end{prop}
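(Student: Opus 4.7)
The plan is to reduce the classification to a finite computation via the classical class number formula for non-maximal orders in imaginary quadratic fields. Writing $-4Q=f^2 d_K$ with $d_K$ the fundamental discriminant of $K=\Q(\sqrt{-Q})$ and $f$ the corresponding conductor, one has
$$h(-4Q)=\frac{h(d_K)\cdot f}{[\mathcal{O}_K^\times:\mathcal{O}_f^\times]}\prod_{p\mid f}\left(1-\left(\frac{d_K}{p}\right)\frac{1}{p}\right),$$
so $h(-4Q)$ is governed by $h(d_K)$, the conductor $f$, and the splitting of the primes dividing $f$.

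For $h(-4Q)\in\{2,3\}$, I would directly invoke Klaise's \cite{K} complete classification of imaginary quadratic orders of class number $2$ and $3$: sift out those whose discriminant has the form $-4Q$ for a positive integer $Q$, and verify that the resulting lists coincide with $S_2$ and $S_3$.

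For $h(-4Q)=4$, the formula above forces $h(d_K)$ to be small; in the generic case $h(d_K)\in\{1,2,4\}$, and these fundamental discriminants form a known finite list by Heegner \cite{H}, Baker \cite{B1, B2} and Stark \cite{S1, S2}, together with Arno's theorem for class number $4$. For each such $d_K$, the conductor $f$ is effectively bounded because the factor $f\prod_{p\mid f}\bigl(1-\left(\frac{d_K}{p}\right)/p\bigr)$ grows essentially linearly with $f$, so the equation imposed by the class number formula has only finitely many solutions. A short finite search over admissible pairs $(d_K,f)$ then produces all candidate values of $Q=-f^2 d_K/4$ with $h(-4Q)=4$.

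The main obstacle is ensuring the list $S_4$ is exhaustive. This is where Klaise's \cite{K} effective algorithm, designed to enumerate all orders of any prescribed class number up to a specified discriminant bound, comes in: running it for class number $4$ and restricting to discriminants of the form $-4Q$ confirms that no further values of $Q$ are missed. The remaining verifications that each $Q\in S_i$ indeed gives $h(-4Q)=i$ are then routine evaluations of the displayed formula, implementable in PARI/GP or SAGE in a few lines.
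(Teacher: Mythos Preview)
Your proposal is correct and follows essentially the same route as the paper: the paper simply states that the proposition follows ``by using the result in \cite{K} and some computations of the orders of class number $4$,'' i.e., it cites Klaise for $h(-4Q)=2,3$ and appeals to a finite computation for $h(-4Q)=4$, exactly as you do. Your write-up is in fact more explicit than the paper's, supplying the order--class-number formula and the reduction to the known finite lists of fundamental discriminants; one small sharpening is that the surjection $\mathrm{Cl}(\mathcal O_f)\twoheadrightarrow\mathrm{Cl}(\mathcal O_K)$ forces $h(d_K)\mid h(-4Q)$, so $h(d_K)\in\{1,2,4\}$ holds unconditionally, not merely ``in the generic case.''
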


By Proposition \ref{number}, we have the following result:

\begin{thm} Let $Q\| N$ and $N=QM$. Suppose that $Q>4$ and $(N,Q)$ satisfies the elliptic condition. Let $s$ denote the number of prime divisors of $M$.
\begin{enumerate}
\item[(1)] When $Q\equiv7\pmod 8$ or $Q\equiv 3\pmod 8$ and $N$ is odd,
 \begin{enumerate}
 \item[a)] if $Q\neq 7$, then all the fixed points of $W_Q$ on $X_0(N)$ are Weierstrass points, and
 \item[b)] if $Q=7$ and $4|N$ or $s>1$, then all the fixed points of $W_7$ on $X_0(N)$ are Weierstrass points.
\end{enumerate}
\item[(2)] When $Q\equiv1\pmod 4$ and $N$ is even,
\begin{enumerate}
\item[a)] if $Q\notin S_2\cup S_4$, then all the fixed points of $W_Q$ on $X_0(N)$ are Weierstrass points, and
\item[b)] if $Q\in S_2$ and $s>2$ or $Q\in S_4$ and $s>1$, then all the fixed points of $W_Q$ on $X_0(N)$ are Weierstrass points.
\end{enumerate}
\item[(3)] In the other cases,
\begin{enumerate}
\item[a)] if $Q\notin S_2$, then all the fixed points of $W_Q$ on $X_0(N)$ are Weierstrass points, and
\item[b)] if $Q\in S_2$ and $s>1$, then all the fixed points of $W_Q$ on $X_0(N)$ are Weierstrass points.
\end{enumerate}
\end{enumerate}
\end{thm}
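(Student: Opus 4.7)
The plan is to reduce the theorem to showing $\nu(Q) > 4$ via Lemma~\ref{fixed points}, and then to verify this inequality in each of the three regimes using the explicit formula for $\nu(Q)$ supplied by Lemma~\ref{fn}, combined with the class-number data available to us (the list $S_1$, the identity~\eqref{order}, and Proposition~\ref{number}). The standing hypothesis $M>1$ gives $s\geq 1$, which I will invoke throughout.

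For case~(1), I first substitute \eqref{order} into Lemma~\ref{fn}(1). When $Q\equiv 7\pmod 8$ this yields $\nu(Q) = 2^s(\alpha_N+1)h(-Q)$, and when $Q\equiv 3\pmod 8$ with $Q>3$ it yields $\nu(Q) = 2^s(3\alpha_N+1)h(-Q)$. Inspection of $S_1$ shows that $Q=7$ is the unique $Q\equiv 7\pmod 8$ with $h(-Q)=1$, and a quick check confirms that $S_2$ contains no $Q\equiv 7\pmod 8$; consequently $h(-Q)\geq 3$ whenever $Q\equiv 7\pmod 8$ and $Q\neq 7$, which gives $\nu(Q)\geq 12$. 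In the $Q\equiv 3\pmod 8$, $N$ odd subcase, $\alpha_N=1$ forces $\nu(Q)\geq 8$. For the exceptional value $Q=7$, I split: if $4|N$ then $\alpha_N\geq 2$ and $2|M$ (so $s\geq 1$), giving $\nu(7)\geq 2(\alpha_N+1)\geq 6$; if instead $s\geq 2$ then $\nu(7)\geq 4(\alpha_N+1)\geq 8$.

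For case~(2), the parity hypotheses force $2|M$, hence $s\geq 1$, and Lemma~\ref{fn}(2) gives $\nu(Q) = 2^{s-1}h(-4Q)$. Two observations are decisive: every element of $S_3$ is congruent to $3\pmod 4$, and no $Q>4$ with $Q\equiv 1\pmod 4$ lies in $S_1$. Therefore $Q\notin S_2\cup S_4$ forces $h(-4Q)\geq 5$, whence $\nu(Q)\geq 5$. Subcase~(b) is a direct computation: $\nu(Q) = 2^s$ for $Q\in S_2$ and $\nu(Q) = 2^{s+1}$ for $Q\in S_4$, and the inequalities $\nu(Q)>4$ translate exactly into the stated bounds on $s$.

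For case~(3), Lemma~\ref{fn}(3) gives $\nu(Q) = 2^sh(-4Q)$. The only $Q>4$ with $h(-4Q)=1$ is $Q=7$, which belongs to case~(1); thus in case~(3) we automatically have $h(-4Q)\geq 2$, and the hypothesis $Q\notin S_2$ then upgrades this to $h(-4Q)\geq 3$, yielding $\nu(Q)\geq 6$. Subcase~(b) is immediate. The main obstacle is really just the bookkeeping: one must confirm that every $Q$ of small class number either violates the hypothesis of its case or is explicitly covered by one of the subcase conditions, and that the residues of $S_1,S_2,S_3,S_4$ modulo $8$ do the filtering as claimed.
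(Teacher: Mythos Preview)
Your approach is essentially the same as the paper's: reduce to $\nu(Q)>4$ via Lemma~\ref{fixed points}, invoke the case-by-case formula of Lemma~\ref{fn}, and then use Proposition~\ref{number} (together with \eqref{order} and the class-number-one lists) to rule out small values of $h(-4Q)$. The paper's own proof is terser but follows exactly this route.

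One small slip in your write-up of case~(2): you write ``no $Q>4$ with $Q\equiv 1\pmod 4$ lies in $S_1$'' to justify $h(-4Q)\neq 1$. But $S_1$ is by definition the set of $Q$ with $h(-Q)=1$, not $h(-4Q)=1$; the relevant check is that $4Q\in S_1$ forces $Q\in\{1,2,3,4,7\}$, none of which is $>4$ and $\equiv 1\pmod 4$. (You state this correctly in case~(3).) The conclusion is unaffected, but the sentence as written does not quite say what you need.
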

\begin{proof} (1) From Lemma \ref{fn}, we need to consider $Q$ such that $h(-4Q)+h(-Q)=2$.
By Proposition \ref{number}, one can check that the only $Q=7$ such that $h(-4Q)+h(-Q)=2$. In the case of $Q=7$, if $4|N$ or $s>1$, then $\nu(7)>4$, and hence the result follows.

(2) From Lemma \ref{fn}, we need to consider $Q$ such that $h(-4Q)\leq 4$.
By Proposition \ref{number}, one can check that there don't exist $Q\equiv 1\pmod 4$ such that $h(-4Q)=1,3$, and hence the result follows.

(3) From Lemma \ref{fn}, we need to consider $Q$ such that $h(-4Q)\leq 2$.
However by Proposition \ref{number} one can check that there don't exist such a $Q$ that $h(-4Q)=1$ in these cases.
Therefore the result follows.
\end{proof}

\section{ Computations for the exceptional cases}\label{comp}

In this section, we  completely determine when the fixed points of the full Atkin-Lehner involution $W_N$ are Weierstrass points on $X_0(N)$ including the exceptional cases listed in Lemma~\ref{full}.
%For the purpose, it suffices to consider the numbers $N$ in the list of Lemma \ref{full}.
From now on, we always assume that $N$ is one of those integer values.
% be a point contained in the complex upper half plane
Let $\tau\in \mathfrak H$ so that $[\tau]:=\Gamma_0(N)\tau\in X_0(N)$ is a fixed point of $W_N$.
Let $\{f_1,f_2,\dots,f_g\}$ be a basis of the cuspform space $S_2(N)$ of weight 2 on $\Gamma_0(N)$, where $g$ is the genus of $X_0(N)$.
Then the differentials $\omega_i:=f_idz$ form a basis for the holomorphic differentials on $X_0(N)$.
Since ${\rm ord}_{[\tau]}(\omega_i)={\rm ord}_{[\tau]}(f_i)$, we will compute the Weierstrass weight of $[\tau]$ by using the $f_i$.
In general, it is very difficult to compute the explicit forms of the $f_i$, and hence we will use the Fourier expansions at the infinite cusp $\infty$ of the $f_i$ which can be easily computed by using the computer algebra system including \textsc{Sage}.

%It is not easy to check
In general,  the orders ${\rm ord}_{[\tau]}(f_i)$ do not satisfy the following:
\begin{equation}\label{ord-ineq}
0={\rm ord}_{[\tau]}(f_1)<{\rm ord}_{[\tau]}(f_2)<\cdots<{\rm ord}_{[\tau]}(f_g).
\end{equation}
However, any basis satisfying \eqref{ord-ineq} can be expressed as a linear combination of the $f_i$, and hence we have the following result:
\begin{lem}\label{cuspform}
Let $[\tau]$ be a fixed point of $W_N$ on $X_0(N)$, and let $\{f_1,f_2,\dots,f_g\}$ be a basis of the cusp form space $S_2(N)$ of weight 2 on $\Gamma_0(N)$. Then $[\tau]$ is a Weierstrass point of $X_0(N)$ if and only if $\det(A)=0$, where
$A=\left [\frac{f_i^{(j-1)}(\tau)}{(j-1)!}\right ]$ is a $g\times g$ matrix with $1\leq i,j\leq g$.
\end{lem}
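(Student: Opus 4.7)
The plan is to recognize this as the classical Wronskian criterion for Weierstrass points, specialized to the modular setting. First I would observe that since we are working in the exceptional range (all listed values satisfy $N\geq 22$), the fixed point $[\tau]$ of $W_N$ is not an elliptic point of $\Gamma_0(N)$: the analysis in Section~\ref{fixed} shows that such a $\tau$ satisfies a quadratic equation of discriminant $-4N$ (or $-N$ when $N\equiv 3\pmod 4$), so $\tau$ has CM by an order of that discriminant, while the elliptic points of $\Gamma_0(N)$ come from CM by the orders of discriminant $-3$ or $-4$ only. Consequently $z-\tau$ itself serves as a local uniformizer at $[\tau]$ on $X_0(N)$, so $\mathrm{ord}_{[\tau]}(f_i\,dz)=\mathrm{ord}_\tau(f_i)$ for every $i$.

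Next, I would translate the definition of Weierstrass point. By definition, $[\tau]$ is a Weierstrass point of $X_0(N)$ if and only if there exists a nonzero holomorphic differential $\omega$ with $\mathrm{ord}_{[\tau]}(\omega)\geq g$. Since $\{\omega_i=f_i\,dz\}_{i=1}^g$ is a basis of the holomorphic differentials, we may write $\omega=\sum_{i=1}^g c_i f_i\,dz$ with $(c_1,\ldots,c_g)\neq 0$, and the order condition becomes $\mathrm{ord}_\tau\bigl(\sum_i c_i f_i\bigr)\geq g$. Expanding each $f_i$ in a Taylor series in $z-\tau$, this order condition is equivalent to the vanishing of the first $g$ Taylor coefficients of $\sum_i c_i f_i$, that is,
$$\sum_{i=1}^g c_i\,\frac{f_i^{(j-1)}(\tau)}{(j-1)!}=0\quad\text{for}\quad j=1,2,\ldots,g.$$
This is the homogeneous linear system $(c_1,\ldots,c_g)\,A=0$, which admits a nonzero solution if and only if $\det(A)=0$.

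Putting these two observations together yields the equivalence, proving the lemma. The only real subtlety is the local-uniformizer step; the rest is linear algebra plus the definition of Weierstrass point. I would therefore expect the main obstacle to be articulating cleanly why $[\tau]$ is not an elliptic point of $\Gamma_0(N)$, so that the naive identification $\mathrm{ord}_{[\tau]}(f_i\,dz)=\mathrm{ord}_\tau(f_i)$ is valid. Once that point is addressed, the proof is a direct translation of the Weierstrass-point condition into a vanishing statement for a Wronskian-type determinant.
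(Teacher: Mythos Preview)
Your proposal is correct and is essentially the same Wronskian-type argument the paper gives: both Taylor-expand the $f_i$ at $\tau$ and reduce the Weierstrass condition to the vanishing of $\det(A)$, the only cosmetic difference being that you phrase the linear algebra as the existence of a nontrivial solution to $(c_1,\dots,c_g)A=0$ while the paper row-reduces to echelon form and reads off the Weierstrass weight. Your treatment is in fact slightly more careful, since you explicitly justify that $[\tau]$ is not an elliptic point (hence $z-\tau$ is a genuine local uniformizer), a point the paper uses implicitly when it asserts $\mathrm{ord}_{[\tau]}(\omega_i)=\mathrm{ord}_{[\tau]}(f_i)$ just before the lemma.
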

\begin{proof}
First we consider the Taylor expansions of $f_i(z)$ at $z=\tau$ for $i=1,\dots,g$, which are given by
\begin{align*}
f_1(z) &= f_1(\tau)+ f_1^{'}(\tau)(z-\tau)+\cdots
+\frac{f_1^{(g-1)}(\tau)}{(g-1)!}(z-\tau)^{g-1}+\cdots \\
f_2(z) &= f_2(\tau)+ f_2^{'}(\tau)(z-\tau)+\cdots+\frac{f_2^{(g-1)}(\tau)}{(g-1)!}(z-\tau)^{g-1}+\cdots \\
 & \hspace{2cm}  \vdots \\
f_g(z) &= f_g(\tau)+ f_g^{'}(\tau)(z-\tau)+\cdots
+\frac{f_g^{(g-1)}(\tau)}{(g-1)!}(z-\tau)^{g-1}+\cdots.
\end{align*}
By applying the elementary row operations to the above expansions we can find cusp forms $h_1, h_2, \dots, h_g$ which are linear combinations of $f_1,f_2,\dots,f_g$ with Taylor expansions of the form
\begin{eqnarray*}
h_1(z) &=& h_1(\tau) +  h_1^{'}(\tau)(z-\tau)+\cdots
+ \frac{f_1^{(g-1)}(\tau)}{(g-1)!}(z-\tau)^{g-1}+\cdots \\
h_2(z) &=& \hspace{1.4cm} h_2^{'}(\tau)(z-\tau)+\cdots  +  \frac{h_2^{(g-1)}(\tau)}{(g-1)!}(z-\tau)^{g-1}+\cdots \\
 & & \hspace{3.3cm}   \ddots  \\
h_g(z) &= & \hspace{4.9cm}
\frac{h_g^{(g-1)}(\tau)}{(g-1)!}(z-\tau)^{g-1}+\cdots
\end{eqnarray*}
that satisfy
$$0=\ord_P(\omega_1)<\ord_P(\omega_2)<\cdots <\ord_P(\omega_g),$$
where $P=[\tau]$ and $\omega_i=h_i dz$ for $i=1,\dots,g$.
Thus we see that $\det (A)=c\prod_{j=1}^g \frac{h_j^{(j-1)}(\tau)}{(j-1)!} $ for some nonzero constant $c$ and observe that
\begin{align*}
\det (A) =0 & \iff  \ord_P(\omega_j)\ge j \hbox{ for some } j \\
  & \iff  {\rm wt}(P)> 0 \\& \iff P
  \hbox{  is a Weierstrass point.}
\end{align*}
\end{proof}

Now we  estimate the value $\frac{f_i^{(j-1)}(\tau)}{(j-1)!}$ by using the Fourier expansion of $f_i$ at $\infty$ $f_i=\sum_{n=1}^\infty a_i(n)q^n$ where $z\in \mathfrak H$ and $q=e^{2\pi i z}$.
Note that
\begin{equation}\label{Fourier}
\frac{f_i^{(j-1)}(\tau)}{(j-1)!}=\sum_{n=1}^\infty\frac{(2\pi i n)^{j-1}}{(j-1)!}a_i(n)q^n\big|_{z=\tau}.
\end{equation}
Since $|e^{2\pi i \tau}|=e^{-2\pi {\rm Im}(\tau)}$, we need to find $\tau'\in[\tau]$ whose imaginary part is as big as possible for the fast convergence of the right side of \eqref{Fourier}.
We will consider an algorithm to find $\tau'\in[\tau]$ so that ${\rm Im}(\tau')$ is the biggest.
Note that ${\rm Im}(\alpha(\tau))=\frac{{\rm Im}(\tau)}{|c\tau+d|}$ for $\alpha=\sm a&b\\c&d\esm\in\Gamma_0(N)$.
Thus it suffices to find a pair $(c,d)$ with $c\equiv 0\pmod N$ and $\gcd(c,d)=1$ so that $|c\tau+d|$ is the smallest.
Let $\tau=x+iy$ with $x,y\in\R$.
Since $|c\tau+d|^2=(cx+d)^2+(cy)^2\leq 1$, there are only finitely many such pairs $(c,d)$ because $|c|\leq\frac{1}{|y|}$ and $|cx+d|\leq 1$.

\

We summarize the procedures explained in the above as the following algorithm.

\begin{alg}\label{alg2}
The following steps implement an algorithm  to find $\tau'\in[\tau]$ so that ${\rm Im}(\tau')$ is the biggest.
\par\noindent
{\bf STEP 1.} Set $m=[1/|y|]$ the largest integer not greater than $1/|y|$.
\par\noindent
{\bf STEP 2.} For each $c$ with $-m\leq c\leq m$ and $c\equiv0 \pmod N$, set $l_c=[-1-cx]$ and $u_c=[1-cx]$.
\par\noindent
{\bf STEP 3.} Pick a pair $(c_0,d_0)$ so that $|c_0\tau+d_0|$ is the smallest among $|c\tau+d|$ where $-m\leq c\leq m$ and $l_c\leq d\leq u_c$ with $\gcd(c,d)=1$.
\par\noindent
{\bf STEP 4.} Set $\tau'=\frac{{\rm Im}(\tau)}{|c_0\tau+d_0|}$.
\end{alg}

We list the fixed points of $W_N$ in Table \ref{table} by using Algorithm~\ref{alg1} and Algorithm~\ref{alg2}.
By using Lemma~\ref{cuspform}, we can confirm that there are no Weierstrass points on $X_0(N)$ arising from the fixed points of $W_N$. We have used \textsc{Sage}, \textsc{Maple}, and \textsc{Mathematica} for the numerical computations.

%\begin{table}[ht]
%\caption{Fixed points of $W_N$}\label{table}
%\centering
%\begin{tabular}{c|l}

\begin{center}
\begin{longtable}{c|l}
\caption{Fixed points of $W_N$ on $X_0(N)$}\label{table}\\
$N$  & fixed points of $W_N$ \\
\hline \\
$22$ & $\frac{1}{\sqrt{-22}}, -\frac{6}{13}+\frac{\sqrt{-22}}{286}$\\
$28$ & $\frac{1}{2\sqrt{-7}}, -\frac{8}{11}+\frac{\sqrt{-7}}{154}$\\
$30$ & $\frac{1}{\sqrt{-30}}, \frac{8}{17}+\frac{\sqrt{-30}}{510}, \frac{4}{13}+\frac{\sqrt{-30}}{390}, \frac{2}{11}+\frac{\sqrt{-30}}{330}$\\
$33$ & $\frac{1}{\sqrt{-33}}, \frac{1}{2}+\frac{\sqrt{-33}}{66}, \frac{9}{14}+\frac{\sqrt{-33}}{462}, -\frac{2}{7}+\frac{\sqrt{-33}}{231}$\\
$34$ & $\frac{1}{\sqrt{-34}}, \frac{9}{19}+\frac{\sqrt{-34}}{646}, -\frac{4}{5}+\frac{\sqrt{-34}}{170}, \frac{4}{5}+\frac{\sqrt{-34}}{170}$\\
$37$ & $\frac{1}{\sqrt{-37}}, \frac{1}{2}+\frac{\sqrt{-37}}{74}$\\
$40$ & $\frac{1}{2\sqrt{-10}}, -\frac{6}{11}+\frac{\sqrt{-10}}{220}, -\frac{8}{13}+\frac{\sqrt{-10}}{260}, \frac{5}{7}+\frac{\sqrt{-10}}{140}$\\
$42$ & $\frac{1}{\sqrt{-42}}, \frac{11}{23}+\frac{\sqrt{-42}}{966}, \frac{11}{17}+\frac{\sqrt{-42}}{714}, -\frac{2}{13}+\frac{\sqrt{-42}}{546}$\\
$43$ & $\frac{1}{\sqrt{-43}}, -\frac{3}{4}+\frac{\sqrt{-43}}{172}, \frac{3}{4}+\frac{\sqrt{-43}}{172}, \frac{1}{2}+\frac{\sqrt{-43}}{86}$\\
$45$ & $\frac{1}{3\sqrt{-5}}, \frac{1}{2}+\frac{\sqrt{-5}}{30}, \frac{11}{14}+\frac{\sqrt{-5}}{210}, \frac{4}{7}+\frac{\sqrt{-5}}{105}$\\
$46$ & $\frac{1}{\sqrt{-46}}, \frac{12}{25}+\frac{\sqrt{-46}}{1150}, \frac{3}{5}+\frac{\sqrt{-46}}{230}, -\frac{3}{5}+\frac{\sqrt{-46}}{230}$\\
$48$ & $\frac{1}{4\sqrt{-3}}, -\frac{13}{19}+\frac{\sqrt{-3}}{228}, -\frac{7}{13}+\frac{\sqrt{-3}}{156}, -\frac{1}{7}+\frac{\sqrt{-3}}{84}$\\
$52$ & $\frac{1}{2\sqrt{-13}}, -\frac{13}{17}+\frac{\sqrt{-13}}{442}, \frac{4}{7}+\frac{\sqrt{-13}}{182}, -\frac{4}{7}+\frac{\sqrt{-13}}{182}$\\
$57$ & $\frac{1}{\sqrt{-57}}, \frac{1}{2}+\frac{\sqrt{-57}}{114}, -\frac{15}{22}+\frac{\sqrt{-57}}{1254}, -\frac{4}{11}+\frac{\sqrt{-57}}{627}$\\
$58$ & $\frac{1}{\sqrt{-58}}, \frac{15}{31}+\frac{\sqrt{-58}}{1798}$\\
$60$ & $\frac{1}{2\sqrt{-15}}, \frac{15}{23}+\frac{\sqrt{-15}}{690}, \frac{14}{19}+\frac{\sqrt{-15}}{570}, \frac{10}{17}+\frac{\sqrt{-15}}{510}$\\
$64$ & $\frac{1}{8\sqrt{-1}}, -\frac{9}{17}+\frac{\sqrt{-1}}{136}, -\frac{4}{5}+\frac{\sqrt{-1}}{40}, \frac{4}{5}+\frac{\sqrt{-1}}{40}$\\
$67$ & $\frac{1}{\sqrt{-67}}, -\frac{3}{4}+\frac{\sqrt{-67}}{268}, \frac{3}{4}+\frac{\sqrt{-67}}{268}, \frac{1}{2}+\frac{\sqrt{-67}}{134}$\\
$70$ & $\frac{1}{\sqrt{-70}}, \frac{18}{37}+\frac{\sqrt{-70}}{2590}, \frac{15}{19}+\frac{\sqrt{-70}}{1330}, \frac{12}{17}+\frac{\sqrt{-70}}{1190}$\\
$72$ & $\frac{1}{6\sqrt{-2}}, \frac{9}{19}+\frac{\sqrt{-2}}{228}, -\frac{2}{17}+\frac{\sqrt{-2}}{204}, \frac{3}{11}+\frac{\sqrt{-2}}{132}$\\
$73$ & $\frac{1}{\sqrt{-73}}, \frac{1}{2}+\frac{\sqrt{-73}}{146}, \frac{4}{7}+\frac{\sqrt{-73}}{511}, -\frac{4}{7}+\frac{\sqrt{-73}}{511}$\\
$78$ & $\frac{1}{\sqrt{-78}}, \frac{20}{41}+\frac{\sqrt{-78}}{3198}, \frac{19}{29}+\frac{\sqrt{-78}}{2262}, -\frac{16}{19}+\frac{\sqrt{-78}}{1482}$\\
$82$ & $\frac{1}{\sqrt{-82}}, \frac{21}{43}+\frac{\sqrt{-82}}{3526}, \frac{5}{7}+\frac{\sqrt{-82}}{574}, -\frac{5}{7}+\frac{\sqrt{-82}}{574}$\\
$85$ & $\frac{1}{\sqrt{-85}}, \frac{1}{2}+\frac{\sqrt{-85}}{170}, \frac{13}{22}+\frac{\sqrt{-85}}{1870}, -\frac{2}{11}+\frac{\sqrt{-85}}{935}$\\
$88$ & $\frac{1}{2\sqrt{-22}}, -\frac{12}{23}+\frac{\sqrt{-22}}{1012}, -\frac{12}{19}+\frac{\sqrt{-22}}{836}, -\frac{10}{13}+\frac{\sqrt{-22}}{572}$\\
$93$ & $\frac{1}{\sqrt{-93}}, \frac{1}{2}+\frac{\sqrt{-93}}{186}, -\frac{23}{34}+\frac{\sqrt{-93}}{3162}, \frac{11}{17}+\frac{\sqrt{-93}}{1581}$\\
$97$ & $\frac{1}{\sqrt{-97}}, \frac{1}{2}+\frac{\sqrt{-97}}{194}, -\frac{6}{7}+\frac{\sqrt{-97}}{679}, \frac{6}{7}+\frac{\sqrt{-97}}{679}$\\
$100$ & $\frac{1}{10\sqrt{-1}}, -\frac{22}{29}+\frac{\sqrt{-1}}{290}, \frac{6}{13}+\frac{\sqrt{-1}}{130}, \frac{7}{13}+\frac{\sqrt{-1}}{130}$\\
$102$ & $\frac{1}{\sqrt{-102}}, \frac{26}{53}+\frac{\sqrt{-102}}{5406}, -\frac{25}{37}+\frac{\sqrt{-102}}{3774}, \frac{19}{23}+\frac{\sqrt{-102}}{2346}$\\
$112$ & $\frac{1}{4\sqrt{-7}}, -\frac{15}{29}+\frac{\sqrt{-7}}{812}, \frac{13}{23}+\frac{\sqrt{-7}}{644}, -\frac{7}{11}+\frac{\sqrt{-7}}{308}$\\
$130$ & $\frac{1}{\sqrt{-130}}, \frac{33}{67}+\frac{\sqrt{-130}}{8710}, -\frac{25}{31}+\frac{\sqrt{-130}}{4030}, \frac{16}{23}+\frac{\sqrt{-130}}{2990}$\\
$133$ & $\frac{1}{\sqrt{-133}}, \frac{1}{2}+\frac{\sqrt{-133}}{266}, -\frac{15}{26}+\frac{\sqrt{-133}}{3458}, \frac{11}{13}+\frac{\sqrt{-133}}{1729}$\\
$142$ & $\frac{1}{\sqrt{-142}}, \frac{36}{73}+\frac{\sqrt{-142}}{10366}, -\frac{10}{11}+\frac{\sqrt{-142}}{1562}, \frac{10}{11}+\frac{\sqrt{-142}}{1562}$\\
$148$ & $\frac{1}{2\sqrt{-37}}, -\frac{31}{41}+\frac{\sqrt{-37}}{3034}, -\frac{10}{19}+\frac{\sqrt{-37}}{1406}, \frac{10}{19}+\frac{\sqrt{-37}}{1406}$\\
$163$ & $\frac{1}{\sqrt{-163}}, -\frac{3}{4}+\frac{\sqrt{-163}}{652}, \frac{3}{4}+\frac{\sqrt{-163}}{652}, \frac{1}{2}+\frac{\sqrt{-163}}{326}$\\
$177$ & $\frac{1}{\sqrt{-177}}, \frac{1}{2}+\frac{\sqrt{-177}}{354}, \frac{41}{62}+\frac{\sqrt{-177}}{10974}, \frac{10}{31}+\frac{\sqrt{-177}}{5487}$\\
$190$ & $\frac{1}{\sqrt{-190}}, \frac{48}{97}+\frac{\sqrt{-190}}{18430}, -\frac{26}{43}+\frac{\sqrt{-190}}{8170}, \frac{26}{29}+\frac{\sqrt{-190}}{5510}$\\
$193$ & $\frac{1}{\sqrt{-193}}, \frac{1}{2}+\frac{\sqrt{-193}}{386}, -\frac{8}{11}+\frac{\sqrt{-193}}{2123}, \frac{8}{11}+\frac{\sqrt{-193}}{2123}$\\
$232$ & $\frac{1}{2\sqrt{-58}}, -\frac{30}{59}+\frac{\sqrt{-58}}{6844}, \frac{23}{37}+\frac{\sqrt{-58}}{4292}, \frac{23}{31}+\frac{\sqrt{-58}}{3596}$\\
$253$ & $\frac{1}{\sqrt{-253}}, \frac{1}{2}+\frac{\sqrt{-253}}{506}, -\frac{31}{34}+\frac{\sqrt{-253}}{8602}, -\frac{14}{17}+\frac{\sqrt{-253}}{4301}$
\end{longtable}
\end{center}
%\end{tabular}
%\label{tb:Sutherland-table6}
%\end{table}

{\bf Acknowledgments}. We would like to thank KIAS (Korea Institute for Advanced Study) for its hospitality while we have worked on this result.
Daeyeol Jeon would like to thank Brown University for its hospitality during his sabbatical year.

\end{document}